\newtheorem{thm}{Theorem}[section]
\newtheorem{lem}[thm]{Lemma}
\newtheorem{rem}{Remark}[section]
\numberwithin{equation}{section}
\renewcommand{\a}{\alpha}
\renewcommand{\b}{\beta}
\newcommand{\e}{\varepsilon}
\newcommand{\de}{\delta}
\newcommand{\ga}{\gamma}
\renewcommand{\k}{\kappa}
\newcommand{\si}{\sigma}
\newcommand{\om}{\omega}
\newcommand{\Ga}{\Gamma}
\newcommand{\Om}{\Omega}
\newcommand{\lan}{\langle}
\newcommand{\ran}{\rangle}
\newcommand{\na}{\nabla}
\newcommand{\WTX}{\widetilde{X}}
\newcommand{\dist}{\operatorname{dist}\,}
\def\R{{\mathbb{R}}}
\def\Z{{\mathbb{Z}}}
\def\CONST3{{C_i}}
\def\CONSTa{{C_1}}
\def\CONST\{100\}{{C_1}}
\def\CONSTb{{C_2}}
\def\CONSTc{{C_3}}
\def\CONSTd{{C_4}}
\def\CONSTe{{C_5}}
\def\CONSTf{{C_6}}
\def\CONSTg{{C_7}}
\def\CONSTh{{C_8}}
\def\CONSTi{{C_9}}
\def\CONSTj{{C_{10}}}
\def\CONSTk{{C_{11}}}
\def\CONSTl{{C_{12}}}
\def\CONSTm{{C_{13}}}
\def\CONSTn{{C_{14}}}
\def\CONSTo{{C_{15}}}
\def\CONSTp{{C_{16}}}
\def\CONSTq{{C_{17}}}
\def\CONSTr{{C_{18}}}
\def\CONSTs{{C_{19}}}
\def\CONSTt{{C_{20}}}
\def\CONSTu{{C_{21}}}
\def\CONSTv{{C_{22}}}
\def\CONSTw{{C_{23}}}
\def\CONSTx{{C_{24}}}
\def\CONSTy{{C_{25}}}
\def\CONSTz{{C_{26}}}
\def\CONSTaa{{C_{27}}}
\def\CONSTab{{C_{28}}}
\def\CONSTac{{C_{29}}}
\def\CONSTad{{C_{30}}}
\def\CONSTae{{C_{31}}}
\def\CONSTaf{{C_{32}}}
\renewcommand{\theenumi}{\arabic{enumi}}
\renewcommand{\labelenumi}{(\theenumi)}
\renewcommand{\theenumii}{\alph{enumii}}
\renewcommand{\labelenumii}{(\theenumii)}
\def\thefootnote{{}}
\title{A stochastically perturbed mean curvature flow by colored noise}
\author{Satoshi Yokoyama$^{1)}$}
\date{\today}
\begin{document}
\maketitle

\begin{abstract}
We study the motion of the hypersurface $(\ga_t)_{t\ge0}$ 
evolving according to the mean curvature perturbed by $\dot{w}^Q$, 
the formal time derivative of the $Q$-Wiener process ${w}^Q$, in a two dimensional bounded domain.
Namely, 
we consider the equation describing the evolution of $\ga_t$ as a stochastic partial differential equation (SPDE)
with a multiplicative noise in the Stratonovich sense, whose inward velocity $V$ is determined by $V=\k\,+\,G \circ \dot{w}^Q$,
where $\k$ is the mean curvature and $G$ is a function 
determined from $\ga_t$. 
Already known results in which the noise depends on only time variable is not applicable to our equation.
To construct a local solution of the equation describing $\ga_t$,
we will derive a certain second order quasilinear SPDE
with respect to the signed distance function determined from $\ga_0$.
Then we construct the local solution making use of probabilistic tools and the classical Banach fixed-point theorem 
on suitable Sobolev spaces.
\end{abstract}

\footnote{
\hskip -6mm
$^{1)}$Department of Mathematics, School of Fundamental Science and Engineering,
Waseda University, 3-4-1 Okubo, Shinjuku-ku, Tokyo 169-8555, Japan\\
e-mail: satoshiyokoyama@aoni.waseda.jp, satoshi2@ms.u-tokyo.ac.jp}
\footnote{
\hskip -6mm
\textit{Keywords: mean curvature flow, stochastic perturbation, Colored noise.}}
\footnote{
\hskip -6mm
\textit{Abbreviated title $($running head$)$: stochastically perturbed mean curvature flow}}
\footnote{
\hskip -6mm
\textit{2010 MSC: 60H15, 35K93, 74A50.}}
\footnote{
\hskip -6mm
Author$^{1)}$ is supported in part by the JSPS KIKIN Grant 18K13430.}

\section{Introduction}

In this paper, we study the stochastic mean curvature flow affected by the $Q$-Wiener process moving on a two dimensional bounded domain.
So far, many results by several authors about the mean curvature flow have been found 
not only in the deterministic case but in the stochastic case.
As a typical example, the mean curvature flow appear in the moving of the {\it interface},
that is, the hypersurface which separates two different substantials, such as 
water and oil, or water and ice and so on.
The time evolution of an interface in a bounded domain are usually observed as a hypersurface which moves 
according to a law arising from a certain physical mutual operations, such as temperature, surface tension, pressure, and so on.
If the evolving velocity of each point of the interface is governed by the mean curvature of it, 
this flow is called the mean curvature flow.

As for the deterministic case, 
Evans and Spruck \cite{ES-2} can be raised up as a successful result,
that is, 
they construct the local solution of the solution which is governed by the mean curvature motion $V=\k$ in a bounded domain in $\R^d$
by means of replacing the equation by that determined by the level set of the signed distance determined uniquely from the interface $\ga_t$
on a tubular neighborhood. 
This method can be reduced to that of the second order nonlinear parabolic differential equation, for instance, see Lunardi \cite{Lu} for more details.
For further summarizing discussion and details for similar models using the level set, see also Bellettini \cite{B}.
%
As for the cases where the evolving velocity relies on 
both the mean curvature $\k$ and an extra force,
in early 1990s Chen and Reitich \cite{CR} consider the Stephan problem
in two phases in a bounded domain $\R^n$. In the proof of \cite{CR}, they derive the local existence and uniqueness for the solution
of the interface. Their method is as follows:
First they assume that the initial hypersurface $\ga_0$ is suitably smooth in such a way that the later discussion works well. 
Denoting by $X_0(x)$, $x\in M$ the parametrization of $\ga_0$, where $M$ is a suitable compact set, 
they study the equation of $u=u(x)$, where $u(x)$ denotes the {\it distance} from a point $X_0(x)$ of $\ga_0$,
in place of using the signed distance from $\ga_t$.
Indeed, a quasilinear second order partial differential equation of $u$ is obtained
and they construct its local solution by employing the Banach fixed point argument.
As for the result with some conservation law, Huisken \cite{H}, Elliott and Garket \cite{EG} study the conserved mean curvature flow and 
show the local existence and uniqueness of the solution.

In the stochastic case, Funaki \cite{F99} studies the mean curvature flow perturbed by white noise in time in a two dimensional domain
and he proves the local existence and uniqueness for such dynamics under the restriction
where the initial hypersurface $\ga_0$ is convex and $\ga_t$ stays convex.
Later, Weber \cite{W} extends the results of \cite{F99} to those in a bounded domain in $\R^n$ and removes the assumption of the convexity.
However, in \cite{W} the noise is white in time 
and their results are limited within in the case where the noise is independent of the spatial variable.
Lions and Souganidis's models \cite{LS1} and \cite{LS2} are similar to ours.
In addition, Souganidis and Yip \cite{SY} study the case where 
the noise is $\e\dot{W}$ with a small $\e>0$, where $\dot{W}$ is white in time and constant in space,
in a two dimensional setting. 
However, their methods and equations are different from ours.
We believe that it is valuable to try to argue using the Sobolev spaces 
as a different method to construct the unique solution of our equation.

The sharp interface limit of Allen Cahn equation for the mass conserving mean curvature motion is discussed by 
Chen, Hilhorst and Logak \cite{CHL}.
On the other hand, as for the stochastic case for the mass conserving model (stochastic Allen Cahn equation),
Funaki and Yokoyama \cite{FY} recently extend the deterministic case to the stochastic one perturbed by the noise
depending on only time and white in time.
Indeed, they prove that the solution of the dynamics exists uniquely up to the time as long as the hypersurface $\ga_t$ keeps being convex in a two dimensional bounded domain
and it can be derived from the sharp interface limit from the stochastic Allen Cahn equation.
In \cite{FY}, to avoid several technical difficulties, they consider under the assumption,
however, we believe and hope the assumption to be removed in not so far future,
In one dimensional case, Funaki \cite{F95} studies the sharp interface limit for the Allen Cahn equation perturbed by space-time white noise.
In higher spatial dimension $n\ge2$, the stochastic Allen Cahn equation with space-time white noise is no longer wellposed.
As for the sharp interface limit of the Allen Cahn equation with even $Q$-Wiener process, no clear answer has not been given 
and it is still an open problem.

Our interest is to study more natural situation.  Indeed, the existing results known so far are mainly in the deterministic case,
which implies that they are treated to be very ideal model which is governed by nonrandom dynamical systems.
However, seeing from the physical view point, it is not strange to take the unexpected factor such as noises into consideration.
%
%

The random forces appearing in models in the field such as physics or biology should depend on both time and spatial variables.
In this paper, we focus on the construction of the solution of the stochastic mean curvature motion
perturbed by the random force which contains the colored noise $\dot{W}^Q$
by making use of the methods of \cite{CR} and \cite{Lu}.

In Denis, Funaki and Yokoyama \cite{DFY}, they discuss the Wong-Zakai type limit to the equation describing a hypersurface which stays convex 
in a two dimensional bounded domain with a noise term which affects only to the normal direction on the hypersurface.
As for the equation which is obtained by the Wong-Zakai type limit in \cite{DFY},
under the assumption 
as long as the hypersurfaces starting from a convex initial hypersurface are still convex,
the dynamics of such hypersurface is represented by the solution of
a quasilinear parabolic second order SPDE of the mean curvature $\k$.
This can be rewritten into an SPDE whose principle part is of the divergence form
with regard to a certain quantity $u$, which is determined by $\k$, and
%
the local existence and uniqueness of $u$ 
is established by making use of the methods done by Debussche, de Moor and Hofmanov\`a \cite{DMH}.

%
Now we are going to state known results and methods which are related and likely to be thought of the nice candidates for our problem to be solved.
First, we want to emphasize that the idea used by Dirr, Luckhaus and Novaga \cite{DLN} is not applicable.
Indeed, in \cite{DLN}, the random force is only assumed to be additive noise with white in time and hence
they reduce the original SPDE to the PDE without the stochastic term, that is, {\it random} PDE which holds $\om$-wisely.

Furthermore, the methods by Weber and Otto \cite{WO} and 
\cite{DMH} 
are not applicable for our settings.
They treat a quasilinear parabolic SPDE which is of divergence form
with a time and spatial dependent noise 
but they do not cover our case. 
In our equation, the coefficient in front of the second order differential consists of $x$, $u$ and the gradient of $u$.
Due to such characteristics of the form of the coefficient,
our equation cannot be reduced to that whose principal part is of the divergence form. 
As a result, the energy estimates used in \cite{WO} and \cite{DMH} do not work well in our model.
In those two papers, the energy estimate in a suitable topology can be obtained thanks to nice assumption.
Since a nice energy estimate with a suitable topologies 
 cannot be obtained in our case,
in this paper we employ the Banach fixed point argument in a suitable Banach space as a different way.

We also need to mention the model discussed by Hofmanov\`a, R$\ddot{\text{o}}$ger and von Renesse \cite{HMvR}.
This model is apparently similar to but different from our model, 
in fact, they consider the graph of a stochastic mean curvature motion on a two dimensional torus, however, 
their equation does not coincide with ours. In fact, in our case the graph is not discussed.
In Denis, Matoussi and Stoica \cite{DMS}, the multiplicative noise containing both $u$ and $u'$ is discussed.
This assumption is stronger than ours. However, only the case of the Laplacian is discussed in \cite{DMS}, hence
the case of our nonlinear term is not covered.

Let us summarize the paper.
In Section 2, we derive our equation and then in Section 3, we formulate the problem.
In Section 4, we prove our main theorem.

\section{Motion of the interface}
In this section, we follow the same notations as used in \cite{CR}. Let $D$ be a bounded domain in $\R^n$. 
We denote by $\ga_0$ a hypersurface without boundary which is of $C^{\infty}$-class
satisfying $\dist(\ga_0,\partial D)>0$, where $\dist(A,B)=\inf_{x\in A,\,y\in B}|x-y|$ for $A,B\subset\R^n$.
Let $M \subset D$ be an $n-1$-dimensional $C^\infty$ manifold without boundary which is isometric to $\ga_0$ and 
\begin{align}
X_0(s'):\, M\rightarrow \ga_0
\end{align}
be a $C^{\infty}$ diffeomorphims. 
We denote by $\bold{n}(s')$ the outward normal vector to $\ga_0$ at $y=X_0(s')$ and define
a map $X$ from $M\times[-L,L]$ to $\R^n$ by
\begin{align}
X(s',s_n)= X_0(s')+s_n\bold{n}(s'),
\end{align}
where $L>0$ is sufficiently small in such a way that $X$ is diffeomorphims onto a neighborhood of $\ga_0$.
If necessary, transferring  the local charts of $M$, without loss of generality, we denote by $s'\in\R$ a generic point in $M$.
In addition, we denote by $\partial_{s}^i=\frac{\partial^i}{\partial s^i}$, $i=1,2$ the partial derivative in the local coordinates.
Therefore, we can take $M$ as a chart and fix it in what follows and
without loss of generality, we can discuss the dynamics of the hypersurface $\ga_t$ which will be defined later 
by using such $M$.
Here, $s_n$ stands for the signed distance from $y$ to $\ga_0$.
Let $S(y)=(S^1(y),\ldots,S^n(y))$ be the inverse function of $y=X(s)$ for $s=(s',s_n)$, where $s'\in M$.
In what follows, we consider the case $n=2$ due to some technical reason which will be stated later.
Note that $\na(S^1(y))\ne\bold{0}$ holds for every $y$.

Our goal is to construct a family of hypersurfaces $\{\ga_t\}_{0\le t\le T}$ defined on a probability space $(\Om,\mathcal{F},P)$ for a sufficiently small $T(\om)>0$
which has the form 
\begin{align}
\ga_t =\{ X(x,s_2)\,\Bigl|\, s_2=u(t,x,\om),\,x\in M,\, \om\in\Om\},\, 0\le t \le T,
\end{align}
where $u:\,[0,T]\times M \times \Om\rightarrow [-L,L]$ is a function  which is of $C([0,T];W^{2+2\a}_p(M))$, $\a\in(0,\frac12)$, almost surely,
and with $u(0,x)=0$ for every $x\in M$,  namely,
$\ga_t$ is the zero level set of the function 
\begin{align*}
  \Phi(t,y)=S^2(y)-u(t,S^{1}(y)),
\end{align*}
and the time evolution of $\ga_t$ is 
governed by
\begin{equation}  \label{eq:Modified EQ}
V= \k + G \circ \dot{w}^Q,
\end{equation}
where the notation $\circ$ means the Stratonovich sense, $V=V(t,x)$ is the inward normal velocity at $X(x,u(t,x))$ for $x\in M$ and
$\k=\k(t,x)$ is the mean curvature at $X(x,u(t,x))$ which is given by
\begin{align}\label{eq:V and k}
  V(t,x)=&-\partial_tX(t,u(t,x))\cdot\bold{n}(x)=\frac{\partial_t\Phi}{|\na_y\Phi|}\Bigr|_{y=X(x,u(t,x))}=-\frac{\partial_tu(t,x)}{|\na_y\Phi|}\Bigr|_{y=X(x,u(t,x))},\\
  \k(t,x)=&\na_y\bigl(\frac{\na_y\Phi}{|\na_y\Phi|}\bigr)\Bigr|_{y=X(x,u(t,x))}\notag\\
         =&-\frac{1}{|\na_y\Phi|}\Bigl(a(x,u(t,x),u'(t,x))u''(t,x)+b(x,u(t,x),u'(t,x))\Bigr)\Bigr|_{y=X(x,u(t,x))},\nonumber
\end{align}
where $a$ and $b$ are the functions as defined in \cite{CR},
namely,
\begin{align}\label{eq:a and b}
a(x,u,p)=&\frac{|\na S^1(X(x,u))|^2}{1+p^2|\na S^1(X(x,u))|^2},\\
b(x,u,p)=&p \text{Tr}(D^2S^1(X(x,u)))-\text{Tr}(D^2S^2(X(x,u)))\nonumber\\
&-\frac{p^2\,\lan D^2S^2(X(x,u))\na S^1(X(x,u)),\na S^1(X(x,u))\ran}{1+p^2|\na S^1(X(x,u))|^2}\nonumber\\
&-\frac{p^3\,\lan D^2S^1(X(x,u))\na S^1(X(x,u)),\na S^1(X(x,u))\ran}{1+p^2|\na S^1(X(x,u))|^2},\nonumber
\end{align}
where $\text{Tr}A$ denotes the trace of the matrix $A$, $D^2A$ means the Hessian matrix of $A$ 
and $\lan\cdot,\cdot\ran$ stands for the inner product of $\R^2$,
while $G$ is a function given by
\begin{align}\label{eq:G}
G=\frac{g(x,u(t,x))}{c(x,u(t,x),u'(t,x))},
\end{align}
where 
$g=g(x,u)$ is a function which is of a class $C^{\infty}(M\times [-L,L])$ and
$c$ is a function as defined in \cite{CR}, that is,
\begin{align}\label{eq:c}
c(x,u,p)=&|\na_y\Phi|\Bigl|_{y=X(x,u)}=\Bigl(1+p^2|\na S^1(X(x,u))|^2\Bigr)^{\frac12}.
\end{align}

 In \cite{B}, in order to construct a solution of \eqref{eq:Modified EQ} with $G\equiv1$ and with $w^Q$ replaced by some smooth deterministic function of $(t,x)$,
they consider the corresponding nonlinear second order parabolic differential equation
and show that its local solution uniquely exists.

 In the physical viewpoint, $G$ in \eqref{eq:G} stands for a quantity determined from $\ga_t$.
 We want to remark that the function $g$ in \eqref{eq:G} does not contain the derivative of $u$ and we restrict ourself to the case of $n=2$. 
 These restrictions are needed to construct the solution $u=u(t,x)$ of SPDE which will be stated in Section 3.
 For more precisely, in the proof of its uniqueness and existence of $u$, 
 several estimates for $u$ in a suitable topology are needed, and to obtain those estimates, 
 we need the estimates of the fundamental solution of the parabolic second order differential equation which 
 will appear in the sections below.
 In the procedure of the proof using those estimates, the assumptions as stated above are actually essential.
\section{Formulation of our problem}
%
In what follows, to simplify notations and make our discussion more understandable,
we restrict $w^Q$ to be in the form of $w^Q(t,x)=\psi(X(x,u(t,x)))B(t)$ for 
$\psi \in C^{\infty}_0(D)$ and $B$ is a standard Brownian motion defined on $(\Om,\mathcal{F},P)$.
\begin{rem}
Indeed, even if the noise of our equation is assumed to be as above, 
it is not too hard to extend the noises to more general one such as a sum of infinite number of independent Brownian motions.
This will be easily understandable by seeing the proof of our main theorem stated in the sections below.
\end{rem}

From \eqref{eq:Modified EQ}, \eqref{eq:V and k}, \eqref{eq:a and b}, \eqref{eq:G} and  \eqref{eq:c},
we get the following quasilinear stochastic differential equation (SPDE):
\begin{align}\label{eq:Main 1}
 \frac{\partial u}{\partial t}(t,x)&=a(x,u(t,x),u'(t,x))u''(t,x)+b(x,u(t,x),u'(t,x))\\
 &-g(x,u(t,x))  \circ \dot{w}^Q(t,x), \quad \nonumber\\
 u(0,x)&=0.\nonumber
\end{align}
Set 
\begin{align}\label{eq:Main 2}
 &f_1(x,u(t,x),u'(t,x))=a(x,u(t,x),u'(t,x)),\\
 &f_2(x,u(t,x),u'(t,x))=b(x,u(t,x),u'(t,x)),\nonumber\\
 &f_3(x,u(t,x))=-g(x,u(t,x)).\nonumber
\end{align}
Rewriting the Stratonovich integral of \eqref{eq:Main 1} into the Ito's form, we get
\begin{align}\label{eq:Main 1 Ito}
 \frac{\partial u}{\partial t}(t,x)=&\tilde{F}(x,u(t,x),u'(t,x),u''(t,x))
 +f_3(x,u(t,x)){\psi}(X(x,u(t,x)))\dot{B}_t,\\
 u(0,x)=&0,\notag
\end{align}
where $\tilde{F}(x,u,p,q)$ for $(x,u,p,q)\in M\times\R\times\R\times\R$ is given by
\begin{align}
\tilde{F}(x,u(t,x),u'(t,x),u''(t,x))=&f_1(x,u(t,x),u'(t,x))u''(t,x)\\
&+F_2(x,u(t,x),u'(t,x)),\notag
\end{align}
and
\begin{align}\label{eq:F_1 and F_2}
 F_2(x,u,p)=&f_2(x,u,p)+\frac12 f_3(x,u) \partial_uf_3(x,u) {\psi}^2(X(x,u))\\
 &+\frac12f_3^2(x,u) {\psi}(X(x,u))\na{\psi}(X(x,u))\cdot\bold{n}(x).\nonumber
\end{align}
The uniqueness and existence of the local solution in the deterministic case ($f_3\equiv0$) are discussed in \cite{Lu}. 
%

For an integer $K$, $\eta_K^{(1)}=\eta_K^{(1)}(u)\in[0,1]$ denotes a $C^\infty$-function for $u\in\R$
which is equal to $1$ for $|u|\le L(1-\frac{1}{1+K})$, while it is equal to $0$ for  $|u|\ge L(1-\frac{1}{2(1+K)})$.
Furthermore, let $\eta_K^{(2)}=\eta_K^{(2)}(u)$ be a $C^\infty$-function taking value in $[0,1]$ defined by
$1$ for $|u|\in [K^{-1},K]$, while $0$ for $|u|\le (2K)^{-1}$ or $|u'|\ge (2K)$.
Similarly, 
$\eta_K^{(3)}=\eta_K^{(3)}(u)\in[0,1]$ denotes a $C^\infty$-function for $u\in\R$
which is equal to $1$ for $|u|\le K$, while it is equal to $0$ for  $|u|\ge 2K$ and furthermore, 
$|\frac{d}{du}\eta_K^{(3)}(u)|\le\frac{2}{K}$.
Let us consider 
\begin{align}\label{eq:Mod Main 1 Ito}
 &\frac{\partial u}{\partial t}(t,x)=\tilde{F}_K(x,u(t,x),u'(t,x),u''(t,x))
 +f_{3,K}(x,u(t,x)) \psi(X(x,u(t,x)))\dot{B}_t,\\
 &u(0,x)=0,\notag
\end{align}
where
\begin{align*}
 \tilde{F}_{K}(x,u(t,x),u'(t,x),u''(t,x))=&\eta_K^{(1)}(u(t,x))\eta_K^{(2)}( \na S^1( X(x,u(t,x)) ) )\eta_K^{(3)}(u''(t,x))\\
 &\times\tilde{F}(x,u(t,x),u'(t,x),u''(t,x)),\notag\\
 f_{3,K}(x,u(t,x))=&\eta_K^{(1)}(u(t,x))f_3(x,u(t,x)).\nonumber
\end{align*}
%
Let us define
\begin{align}
\si_K^1=&\inf\Bigl\{t>0,\,\Bigl|\,\inf_{x\in M}|u(t,x)|\ge L(1-\frac{1}{1+K})\Bigr\},\notag\\
\si_K^2=&\inf\Bigl\{t>0,\,\Bigl|\,\inf_{x\in M}|\na S^1( X(x,u(t,x)) )|\le K^{-1}\Bigr\}\wedge
\inf\Bigl\{t>0,\,\Bigl|\,\sup_{x\in M}|\na S^1( X(x,u(t,x)) )|\ge 2K\Bigr\},\notag\notag\\
\si_K^3=&\inf\Bigl\{t>0,\,\Bigl|\,\sup_{x\in M}|u''(t,x)|\ge K\Bigr\}.\notag
\end{align}
Following the convention, we set $\si_K^i=\infty$, $i=1,2,3$ if the above set $\{\cdot\}$ is empty.
Let $\si_K=\min{(\si_K^1,\si_K^2,\si_K^3)}$.
Then, the solution $u^K=u^K(t,x)$ of \eqref{eq:Mod Main 1 Ito} 
coincides with the solution $u$ of \eqref{eq:Main 1 Ito}
on $0\le t\le \si_K$.
We wish construct a solution of \eqref{eq:Mod Main 1 Ito} later.
In what follows, we simply write $\tilde{F}_{K}$, $f_{3,K}$ for $\tilde{F}$, $f_{3}$, respectively.
Define the second order linear operator $A$ to be
\begin{align}\label{eq:operator A}
 Au=&\partial_q \tilde{F}(x,0,0,0)u''+\partial_p \tilde{F}(x,0,0,0)u'+\partial_u \tilde{F}(x,0,0,0)u,
\end{align}
(in our case, $u_0(x)=u_0'(x)=u_0''(x) \equiv 0$).
Note that the derivative of $\tilde{F}$ at $(x,0,0,0)$ in \eqref{eq:operator A} is characterized by $\ga_0$
and in addition for the initial hypersurface, 
we assume that $\ga_0$ satisfy $\inf_{x\in M}|\na S^1(X_0(x))|>0$.
Note that $\inf_{x\in M}\partial_q \tilde{F}(x,0,0,0) > 0$ holds by taking suitable large number $K$. 
%
Take such integer $K$ and fix it. 

Let $R>0$ and fix it. 
For $T>0$ which will be determined suitably later and $\a,\a_1>0$, we introduce the state space of $u$ as follows: 
\begin{align}\label{eq:space X}
 Y=&\Bigl\{ u \in L^p(\Omega;W^{0,2+2\a}_p([0,T]\times M) \cap W^{\a_1,2}_p([0,T]\times M)) \,\Bigl| \, u(0,x)\equiv0,\,x\in M,\,|||u||| \le R  \Bigr\},
\end{align}
where
\begin{align}\label{eq:distance of X}
 |||u|||=& \Bigl( E\Bigl[||u||_{ W^{0,2+2\a}_p([0,T]\times M) \cap W^{\a_1,2}_p([0,T]\times M)}^p \Bigr]\Bigr)^{\frac1p},
\end{align}
and
\begin{align}
  ||u||_{W^{0,2+2\a}_p([0,T]\times M) \cap W^{\a_1,2}_p([0,T]\times M)} = 
  ||u||_{W^{0,2+2\a}_p([0,T]\times M)} + ||u||_{W^{\a_1,2}_p([0,T]\times M)}.   \notag
\end{align}
In what follows, we set $\WTX=W^{0,2+2\a}_p([0,T]\times M) \cap W^{\a_1,2}_p([0,T]\times M)$.
We state our main theorem.
\begin{thm}
Let $\a_1\in(0,\frac{1-\de}{4})$ for arbitrary small $\de>0$ and $\a\in(0,\frac14)$ and furthermore,
let us assume $p>4$ satisfying $\a_1 p>1$ and $\a p > \frac12$. 
Let $T>0$ be a sufficiently small number satisfying \eqref{eq:the way of choosing T} below.
Then, the Equation \eqref{eq:Mod Main 1 Ito} has a unique solution $u(t,x)\in C([0,T];W^{2+2\a}_p(M))$, a.s., 
namely, \eqref{eq:Modified EQ} has a unique solution on $t\le\si_K\wedge T$ and $\si_K>0$ a.s.
\end{thm}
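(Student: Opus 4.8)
The plan is to run the standard quasilinear--parabolic scheme --- linearize at the initial data, invert the resulting parabolic operator, and close the full equation by a Banach fixed point in the ball $Y$ of \eqref{eq:space X} --- but to carry it out uniformly in $\om$, i.e.\ inside $L^p(\Om;\WTX)$, so that the stochastic forcing $f_{3,K}(x,u)\psi(X(x,u))\dot B_t$ is treated by It\^o-type estimates rather than pathwise. Rewrite \eqref{eq:Mod Main 1 Ito} as
\begin{align*}
 \partial_t u-Au=\Phi_K(u)+h_K(x,u)\,\dot B_t,\qquad u(0,\cdot)\equiv0,
\end{align*}
where $A$ is the operator \eqref{eq:operator A} --- uniformly elliptic once $K$ has been fixed large, since $\inf_{x\in M}\partial_q\tilde F(x,0,0,0)>0$ --- the remainder $\Phi_K(u):=\tilde F_K(x,u,u',u'')-Au$ collects the lower-order and genuinely nonlinear terms, and $h_K(x,u):=f_{3,K}(x,u)\psi(X(x,u))$. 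Since $u_0\equiv u_0'\equiv u_0''\equiv0$, $A$ is exactly the linearization of $\tilde F$ at $(x,0,0,0)$, so $\Phi_K(u)$ is quadratically small in $(u,u',u'')$ near $0$; together with the cut-offs $\eta_K^{(1)},\eta_K^{(2)},\eta_K^{(3)}$, which make all coefficients globally bounded and Lipschitz, this is the mechanism that will furnish small constants once $u(0)=0$ and $T$ is small.

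For fixed $v\in Y$ define $\mathcal Sv:=u$ as the mild solution
\begin{align*}
 u(t)=\int_0^t e^{(t-s)A}\Phi_K(v(s))\,ds+\int_0^t e^{(t-s)A}h_K(\cdot,v(s))\,dB_s.
\end{align*}
The deterministic convolution is controlled in $\WTX$ by the parabolic $L^p$-maximal-regularity bounds for $A$ (equivalently, the fundamental-solution estimates recalled in Section~2), giving $\|\int_0^\cdot e^{(\cdot-s)A}\Phi_K(v(s))\,ds\|_{\WTX}\lesssim\|\Phi_K(v)\|_{L^p([0,T]\times M)}$ and, by interpolation and the vanishing initial trace, a gain of a positive power of $T$. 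The stochastic convolution is handled by the factorization method combined with the Burkholder--Davis--Gundy inequality: writing it as $c_\b\int_0^t(t-s)^{\b-1}e^{(t-s)A}\big(\int_0^s(s-r)^{-\b}e^{(s-r)A}h_K(\cdot,v(r))\,dB_r\big)\,ds$ for a suitable $\b\in(0,1)$, one converts stochastic-integral regularity into analytic-semigroup smoothing, lands in $W^{0,2+2\a}_p$ in the space variable, and picks up time-regularity of order just below $\tfrac14$ --- which is precisely the range $\a_1\in(0,\frac{1-\de}{4})$, and is where it becomes essential that $g$, hence $h_K$, does not involve $u'$. Boundedness of $h_K$ (from the cut-off and the smoothness of $g$ and $\psi$) supplies an extra factor $T^{\e}$. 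Fixing $R$ first and then $T$ small --- this is the smallness condition \eqref{eq:the way of choosing T} referred to in the statement --- gives $\mathcal S(Y)\subset Y$.

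For $v_1,v_2\in Y$ the difference $\mathcal Sv_1-\mathcal Sv_2$ solves the same linear equation driven by $\Phi_K(v_1)-\Phi_K(v_2)$ and by $(h_K(\cdot,v_1)-h_K(\cdot,v_2))\dot B_t$. Since $\Phi_K$ is Lipschitz from $\WTX$ into $L^p([0,T]\times M)$ on $Y$ with constant $O(R+T^{\e})$ --- here the quadratic structure of $\Phi_K$ and the smallness of the low-order norms of elements of $Y$ (again $u(0)=0$) are used --- and $v\mapsto h_K(\cdot,v)$ is Lipschitz with bounded derivative, the two families of estimates above yield $|||\mathcal Sv_1-\mathcal Sv_2|||\le\th\,|||v_1-v_2|||$ with $\th<1$ for $T$ small. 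Banach's fixed-point theorem then produces a unique $u\in Y$ solving \eqref{eq:Mod Main 1 Ito}. The anisotropic Sobolev embedding of $\WTX=W^{0,2+2\a}_p([0,T]\times M)\cap W^{\a_1,2}_p([0,T]\times M)$ into $C([0,T];W^{2+2\a}_p(M))$ --- for which the hypotheses $\a_1p>1$ (time continuity) and $\a p>\tfrac12$ are tailored --- gives $u\in C([0,T];W^{2+2\a}_p(M))$ a.s. Finally $\si_K>0$ a.s., because at $t=0$ one has $u=u'=u''=0$ and $\inf_{x\in M}|\na S^1(X_0(x))|>0$, so each set defining $\si_K^1,\si_K^2,\si_K^3$ is empty for small $t$ by continuity; and on $[0,\si_K]$ the solutions of \eqref{eq:Mod Main 1 Ito} and \eqref{eq:Main 1 Ito} coincide, whence \eqref{eq:Modified EQ} has a unique solution on $t\le\si_K\wedge T$. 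Uniqueness in the stated class follows by restricting any such solution to a short interval on which it lies in $Y$ and invoking uniqueness of the fixed point.

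The technical core --- and the step I expect to be the main obstacle --- is the $\WTX$-estimate for the stochastic convolution: one must simultaneously extract $2+2\a$ spatial derivatives from $e^{tA}$, a fractional time-regularity $\a_1$ from the factorization/BDG machinery, and a positive power of $T$ for the contraction, all while the noise coefficient $h_K$ depends on the unknown --- which is workable only because $g$ is free of $u'$ and $n=2$. On the deterministic side the parallel difficulty is bounding $\Phi_K$ in $L^p_tL^p_x$ by the $\WTX$-norm with a small constant, for which the vanishing initial datum and the sharp maximal-regularity estimates for $A$ are indispensable; it is precisely the joint compatibility of $p>4$, $\a\in(0,\frac14)$, $\a_1\in(0,\frac{1-\de}{4})$, $\a_1p>1$ and $\a p>\tfrac12$ that lets all of these estimates close at once.
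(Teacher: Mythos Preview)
Your overall architecture matches the paper exactly: linearize at $(x,0,0,0)$ to define $A$, set up the fixed-point map on the ball $Y\subset L^p(\Om;\WTX)$, split the mild solution into a deterministic and a stochastic convolution, and close by Banach's theorem after extracting positive powers of $T$ from the vanishing initial data. Two points deserve comment.

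First, your maximal-regularity claim for the deterministic piece is too strong. You write $\|\int_0^\cdot e^{(\cdot-s)A}\Phi_K(v(s))\,ds\|_{\WTX}\lesssim\|\Phi_K(v)\|_{L^p([0,T]\times M)}$, but $\WTX$ contains $W^{0,2+2\a}_p$, and an $L^p$ source only produces a $W^{1,2}_p$ solution (Solonnikov), which is two spatial derivatives, not $2+2\a$. The paper handles this by invoking Krylov's $L^p$-theory in Bessel-potential scales to get $\|w^{(d)}\|_{L^p_tW^{2+2\a}_p}\lesssim\|\psi\|_{L^p_tW^{2\a}_p}$, and then separately estimates the \emph{fractional} norms $\|\psi\|_{W^{0,2\a}_p}$ and $\|\psi\|_{W^{\a_1,0}_p}$ of the remainder $\psi=\tilde F(u_1)-\tilde F(u_2)-A(u_1-u_2)$ --- this is the content of the computations \eqref{eq:psi-001-001}--\eqref{eq:estimate of psi (d) - 001}. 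Your Lipschitz bound ``$\Phi_K:\WTX\to L^p$ with constant $O(R+T^\e)$'' is therefore not enough; you need $\Phi_K:\WTX\to W^{0,2\a}_p\cap W^{\a_1,0}_p$, and it is the $W^{0,2\a}_p$ part (fractional $x$-regularity of a composition involving $u''$) that costs the most work.

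Second, for the stochastic convolution the paper does \emph{not} use the factorization method. It works directly with the fundamental solution $g(t,x,y)$ of $\partial_t-A$ and the Gaussian bounds of Friedman, together with an integration-by-parts identity that replaces $D_x^k g$ by $(D_z^{-1})^kD_x^k g$ acting against $D_z^k$ of the noise coefficient; this is where the hypothesis that $g$ (hence $h_K$) does not contain $u'$ is actually used, since one needs $D_z^2 f_4(z,u(z))$ to be controlled by $|u_1-u_2|$ alone. The time-fractional estimate then comes from an explicit kernel computation (the inequality \eqref{eq:7-0-3}) rather than from factorization. Your factorization route is a legitimate alternative in principle, but you would still have to explain how to reach $2+2\a$ spatial derivatives on $w^{(st)}$ --- the semigroup smoothing alone gives at most two, so some device analogous to the paper's $(D_z^{-1})^kD_x^k$ trick (transferring regularity onto $h_K$) is unavoidable.
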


\section{Contraction principle}
%

\subsection{Setting for showing the contraction principle}
For given $u\in Y$, we define a map $\Ga$ from $Y$ to $Y$ by $v=\Ga(u)$ where $v$ is the solution of 
\begin{align}\label{eq:sol w}
 \frac{\partial v}{\partial t}&=Av(t,x)+\tilde{F}(x,u(t,x),u'(t,x),u''(t,x))\\
 &-Au(t,x)+f_3(x,u(t,x)){\psi}(X(x,u(t,x)))\dot{B}_t,\nonumber\\
 v(0,&x)=0.\nonumber 
\end{align}
%
In this case, the mild solution $v$ of \eqref{eq:sol w} is 
\begin{align}\label{eq:mild solution w}
 v(t,x)=&\int_0^t e^{(t-s)A}\tilde{F}(x,u(s,x),u'(s,x),u''(s,x))ds\\
 &+ \int_0^t e^{(t-s)A}f_3(x,u(s,x)){\psi}(X(x,u(s,x)))dB_s\nonumber\\
        &-\int_0^t e^{(t-s)A} Au(s,x)ds. \nonumber
\end{align}
In the case of $u=\Ga(0)$, $u$ satisfies
\begin{align}\label{eq:Main Linearized}
 \frac{\partial u}{\partial t}(t,x)=&Au(t,x)+\tilde{F}(x,0,0,0)+f_3(x,0) {\psi}(X(x,0))\dot{B}_t,\\
 u(0,x)=&0. \notag
\end{align}
We need to show that $\Ga(Y)\subset Y$ and $\Ga$ is a contraction map.
Setting $w=\Ga(u_1)-\Ga(u_2)$ for $u_1,u_2\in Y$, $w$ satisfies
\begin{align}\label{eq:solution of the difference}
 \frac{\partial w}{\partial t}=&Aw(t,x)+\Bigl[\tilde{F}(x,u_1(t,x),u_1'(t,x),u_1''(t,x))-\tilde{F}(x,u_2(t,x),u_2'(t,x),u_2''(t,x))\Bigr]\\
 &- A(u_1(t,x)-u_2(t,x))\nonumber\\
 &+ \Bigl(f_3(x,u_1(t,x)){\psi}(X(x,u_1(t,x)))-f_3(x,u_2(t,x)){\psi}(X(x,u_2(t,x)))\Bigr)\dot{B}_t,\notag\\
 w(0,x)=&0.\nonumber
\end{align}
The mild solution of \eqref{eq:solution of the difference} is given by
\begin{align}\label{eq:mild sol of the difference}
 w(t,x)=&\int_0^t e^{(t-s)A}\psi(s,x)ds\\
        &+ \int_0^t e^{(t-s)A}
        \Bigl(f_3(x,u_1(s,x)){\psi}(X(x,u_1(s,x)))\notag\\
        &\quad \quad -f_3(x,u_2(s,x)){\psi}(X(x,u_2(s,x)))\Bigr)dB_s\nonumber\\
       =&\int_0^t e^{(t-s)A}\psi(s,x)ds
        + \int_0^t e^{(t-s)A}
        \Bigl( f_4(x,u_1(s,x))-f_4(x,u_2(s,x)) \Bigr)dB_s\nonumber\\
        \equiv& w^{(d)}(t,x)+w^{(st)}(t,x),\nonumber
\end{align}
where
\begin{align*}
 \psi(t,x)=&\tilde{F}(x,u_1(t,x),u_1'(t,x),u_1''(t,x))-\tilde{F}(x,u_2(t,x),u_2'(t,x),u_2''(t,x))\\
 &-A\bigl(u_1(t,x)-u_2(t,x)\bigr),
\end{align*}
and
\begin{align*}
f_4(x,u(t,x))=f_3(x,u(t,x)){\psi}(X(x,u(t,x)).
\end{align*}
%
%
For the derivative $D_x^{2} f_4(x,u(x))$, we have
\begin{align}\label{eq:6-0}
&D_x^{2} f_4(x,u_1(x)) - D_x^{2}f_4(x,u_2(x))\\
=&\int_0^1 D_u D_x^{2}f_4(x,\si'u_1(x)+(1-\si')u_2(x))d\si' \,(u_1(x)-u_2(x)).\notag
\end{align}
%
Then, Sobolev embedding theorem leads to $W^{\a_1,p}([0,T])\subset C^{\ga_1}([0,T])$ for $\ga_1\in(0,\a_1-\frac1p)$
and $W^{2+2\a,p}(M) \subset C^{\ga_2}(M)$ for $\ga_2\in(0,2\a-\frac1p)$. 
Thus, it follows from the definition of the space $Y$ that $u$, $u'$ and $u''$ belong to $C^{\ga_1}([0,T])\cap C^{\ga_2}(M)$ a.s. 
%
We may assume that $|D_u D_x^{2}f_4|$ is finite a.s. by restricting $t$ to $t\wedge \si_K$.
%
%
In what follows we write $w^{(d)}$ and $w^{(st)}$ for short.
\subsection{Time Regularity for $w^{(d)}$}
In this section we study the time regularity for $w^{(d)}$.
$w^{(d)}$ satisfies
\begin{align}\label{eq:sol of (d) - 001}
 \frac{\partial w^{(d)}}{\partial t}=&Aw^{(d)}(t,x)+\psi(t,x),\quad w^{(d)}(0,x)=0.
\end{align}
First, Solonnikov \cite{So}, Theorem 5.4 shows that 
for $p>1$, 
\begin{align}\label{eq:Schauder of sol of (d) - 000-0}
 ||w^{(d)}||_{W^{1,2}_p([0,T]\times M)} &\le \CONSTa ||\psi||_{L^p([0,T]\times M)},
\end{align}
for $\CONSTa>0$, where $W^{1,2}_p([0,T]\times M)$ is the usual parabolic Sobolev space.
In our case, in place of the above space, we consider $W^{\a,2}_p$, $\a\in(0,\frac12)$, that is, 
$W^{\a,2}_p$ is a family of functions $u$ of $L^p$ having a finite value of
\begin{align*}
 ||u||_{W^{\a,2}_p([0,T]\times M)} =& 
 ||u||_{W^{0,2}_p([0,T]\times M)} +\Bigl( \sum_{i=0}^2 \int_M dx \int_0^T \int_0^T  \frac{|D_x^i(u(t,x)-u(s,x))|^p}{|t-s|^{1+\a p}} dsdt\Bigr)^{\frac1p},
\end{align*}
First, note that
\begin{align}\label{eq:Schauder of sol of (d) - 000-0}
 ||w^{(d)}||_{W^{\a_1,2}_p([0,T]\times M)} &\le \CONSTa ||\psi||_{L^p([0,T]\times M)},
\end{align}
holds from \eqref{eq:Schauder of sol of (d) - 000-0}.
On the other hand, following the method of Krylov \cite{K1} for example, we also obtain  
\begin{align}\label{eq:Schauder of sol of (d) - 000-Kry}
 ||w^{(d)}||_{L^p([0,T];H^{2+2\a}_p(M))} &\le \CONSTb ||\psi||_{L^p([0,T]; H^{2\a}_p(M))},
\end{align}
for $\CONSTb>0$, 
where $H^{2+2\a}_p(M)$ denotes a Bessel potential space, (see \cite{K1} for details). 
As is discussed in remark 2.1 of \cite{DMH}, we see $H^{2+2\a+\e}_p(M)) \subset W^{2+2\a}_p(M) \subset H^{2+2\a-\e}_p(M)$.
Therefore, taking $\e\rightarrow 0$, we get
\begin{align}\label{eq:Schauder of sol of (d) - 000-Kry and Hoh}
 ||w^{(d)}||_{L^p([0,T];W^{2+2\a}_p(M))} &\le \CONSTb ||\psi||_{L^p([0,T]; W^{2\a}_p(M))}.
\end{align}
As a result, 
\begin{align}\label{eq:Schauder of sol of (d) - 000-000-001}
 ||w^{(d)}||_{\WTX} 
 &\le \CONSTc ||\psi||_{W^{0,2\a}_p([0,T]\times M)\cap W^{\a_1,0}_p([0,T]\times M)},
\end{align}
follows from \eqref{eq:Schauder of sol of (d) - 000-0} and \eqref{eq:Schauder of sol of (d) - 000-Kry and Hoh}
for some $C_3>0$.
%
%
%
In what follows, $C_i$, $i\ge4$ denote positive constants independent of $T$ but which can  depend on $K$.
Since
\begin{align}\label{eq:psi-001-001}
\psi(t,x)
=& \int_0^1 \Bigl(\tilde{F}_u(x,\xi_{\tilde{\si}}(t,x))-\tilde{F}_u(x,\bold{0})\Bigr)\bigl(u_1(t,x)-u_2(t,x)\bigr)d\tilde{\si}\\
 &+\int_0^1 \Bigl(\tilde{F}_p(x,\xi_{\tilde{\si}}(t,x))-\tilde{F}_p(x,\bold{0})\Bigr)\bigl(u_1'(t,x)-u_2'(t,x)\bigr)d\tilde{\si}\notag\\
 &+\int_0^1 \Bigl(\tilde{F}_q(x,\xi_{\tilde{\si}}(t,x))-\tilde{F}_q(x,\bold{0})\Bigr)\bigl(u_1''(t,x)-u_2''(t,x)\bigr)d\tilde{\si}\notag,
\end{align}
where
\begin{align*}
 \xi_{\tilde{\si}}(t,x) =& \tilde{\si}\bigl(u_1(t,x),u_1'(t,x),u_1''(t,x)\bigr)+(1-\tilde{\si})\bigl(u_2(t,x),u_2'(t,x),u_2''(t,x)\bigr),\\
 \bold{0}=&\bigl(u_0(0,x),u_0'(0,x),u_0''(0,x)\bigr)=(0,0,0),
\end{align*}
Using
\begin{align*}
\sup_{x \in M} \Bigl|\tilde{F}_i(x,\xi_{\tilde{\si}}(t,x))-\tilde{F}_i(x,\bold{0})\Bigr|&\le C_4 T^{\ga_1},
\end{align*}
for sufficiently short time $t\le T$ and $C_4>0$.
(Use $|u^{(i)}(t \wedge T ,x)|\le [u^{(i)}(\cdot,x)]_{C^{\ga_1}([0,T])}T^{\ga_1}$),
we get
\begin{align}\label{eq:Lp[0,T]times M}
 ||\psi||_{L^p([0,T]\times M)}^p\le& T^{p\ga_1} C_5 [ u_1-u_2]_{L^p}^p,
\end{align}
where
\begin{align*}
[ u_1-u_2]_{L^p} = \Bigl(\sum_{j=0}^2 ||D^{j}_x\bigl(u_1-u_2\bigr)||_{ L^p((0,T)\times M) }^p\Bigr)^{\frac{1}{p}}.
\end{align*}
Therefore,
\begin{align}\label{eq: Schauder time sobolev}
 ||\psi||_{L^p([0,T]\times M)}^p \le&  C_6  T^{p\ga_1} ||u_1-u_2||_{\WTX}^p,
\end{align}
holds. Next, note that
\begin{align}\label{eq:psi-002-001}
\psi(t,x)
=& \int_0^1 \Bigl(\tilde{F}_u(x,\xi_{\tilde{\si}}(t,x))-\tilde{F}_u(x,\bold{0})\Bigr)\bigl(u_1(t,x)-u_2(t,x)\bigr)d\tilde{\si}\\
 &+\int_0^1 \Bigl(\tilde{F}_p(x,\xi_{\tilde{\si}}(t,x))-\tilde{F}_p(x,\bold{0})\Bigr)\bigl(u_1'(t,x)-u_2'(t,x)\bigr)d\tilde{\si}\notag\\
 &+\int_0^1 \Bigl(\tilde{F}_q(x,\xi_{\tilde{\si}}(t,x))-\tilde{F}_q(x,\bold{0})\Bigr)\bigl(u_1''(t,x)-u_2''(t,x)\bigr)d\tilde{\si}\notag,
\end{align}
can be rewritten into
\begin{align*}
 & \int_0^1 \Bigl(\tilde{F}_u(x,\xi_{\tilde{\si}}(t,x))-\tilde{F}_u(x,\xi_{\tilde{\si}}(s,x))\Bigr)\bigl(u_1(t,x)-u_2(t,x)\bigr)d\tilde{\si}\\
 &+\int_0^1 \Bigl(\tilde{F}_u(x,\xi_{\tilde{\si}}(s,x))-\tilde{F}_u(x,\bold{0})\Bigr)\bigl(u_1(t,x)-u_2(t,x)\bigr)d\tilde{\si}\notag\\
 & \int_0^1 \Bigl(\tilde{F}_p(x,\xi_{\tilde{\si}}(t,x))-\tilde{F}_p(x,\xi_{\tilde{\si}}(s,x))\Bigr)\bigl(u_1'(t,x)-u_2'(t,x)\bigr)d\tilde{\si}\notag\\
 &+\int_0^1 \Bigl(\tilde{F}_p(x,\xi_{\tilde{\si}}(s,x))-\tilde{F}_p(x,\bold{0})\Bigr)\bigl(u_1'(t,x)-u_2'(t,x)\bigr)d\tilde{\si}\notag\\
 & \int_0^1 \Bigl(\tilde{F}_q(x,\xi_{\tilde{\si}}(t,x))-\tilde{F}_q(x,\xi_{\tilde{\si}}(s,x))\Bigr)\bigl(u_1''(t,x)-u_2''(t,x)\bigr)d\tilde{\si}\notag\\
 &+\int_0^1 \Bigl(\tilde{F}_q(x,\xi_{\tilde{\si}}(s,x))-\tilde{F}_q(x,\bold{0})\Bigr)\bigl(u_1''(t,x)-u_2''(t,x)\bigr)d\tilde{\si}\notag.
\end{align*}
Thus, 
\begin{align}\label{eq:time-det-001}
\psi(t,x)-\psi(s,x)
=& \int_0^1 \Bigl(\tilde{F}_u(x,\xi_{\tilde{\si}}(t,x))-\tilde{F}_u(x,\xi_{\tilde{\si}}(s,x))\Bigr)\bigl(u_1(t,x)-u_2(t,x)\bigr)d\tilde{\si}\\
 &+\int_0^1 \Bigl(\tilde{F}_u(x,\xi_{\tilde{\si}}(s,x))-\tilde{F}_u(x,\bold{0})\Bigr)\Bigl(\bigl(u_1(t,x)-u_2(t,x)\bigr) - \bigl(u_1(s,x)-u_2(s,x)\bigr)\Bigr)d\tilde{\si}\notag\\
 &+\int_0^1 \Bigl(\tilde{F}_p(x,\xi_{\tilde{\si}}(t,x))-\tilde{F}_p(x,\xi_{\tilde{\si}}(s,x))\Bigr)\bigl(u_1'(t,x)-u_2'(t,x)\bigr)d\tilde{\si}\notag\\
 &+\int_0^1 \Bigl(\tilde{F}_p(x,\xi_{\tilde{\si}}(s,x))-\tilde{F}_p(x,\bold{0})\Bigr)\Bigl(\bigl(u_1'(t,x)-u_2'(t,x)\bigr) - \bigl(u_1'(s,x)-u_2'(s,x)\bigr)\Bigr)d\tilde{\si}\notag\\
 &+\int_0^1 \Bigl(\tilde{F}_q(x,\xi_{\tilde{\si}}(t,x))-\tilde{F}_q(x,\xi_{\tilde{\si}}(s,x))\Bigr)\bigl(u_1''(t,x)-u_2''(t,x)\bigr)d\tilde{\si}\notag\\
 &+\int_0^1 \Bigl(\tilde{F}_q(x,\xi_{\tilde{\si}}(s,x))-\tilde{F}_q(x,\bold{0})\Bigr)\Bigl(\bigl(u_1''(t,x)-u_2''(t,x)\bigr) - \bigl(u_1''(s,x)-u_2''(s,x)\bigr)\Bigr)d\tilde{\si}\notag.
\end{align}
Then there exists a some constant $C_7$ such that 
\begin{align}\label{eq:time-det-002}
  \Bigl|\tilde{F}_k(x,\xi_{\tilde{\si}}(t,x))-\tilde{F}_k(y,\xi_{\tilde{\si}}(s,x))\Bigr|
    \le& C_7 \sum_{j=0}^2\Bigl(|D^j(u_1(t,x)-u_1(s,x))| +   |D^j(u_2(t,x)-u_2(s,x))|\Bigr),
\end{align}
holds for $k=u,p,q$.
%
Furthermore, since 
\begin{align*}
 \xi_{\tilde{\si}}(t,x) =& 
 \tilde{\si}\bigl(u_1(t,x)-u_1(0,x),u_1'(t,x)-u_1'(0,x),u_1''(t,x)-u_1''(0,x)\bigr)\\
 &+(1-\tilde{\si})\bigl(u_2(t,x)-u_2(0,x),u_2'(t,x)-u_2'(0,x),u_2''(t,x)-u_2''(0,x)\bigr),
\end{align*}
using the mean value theorem, we get
\begin{align}\label{eq:time-det-003}
\Bigl|\tilde{F}_k(x,\xi_{\tilde{\si}}(t,x))-\tilde{F}_k(x,\bold{0})\Bigr| 
\le& C_8 T^{\ga_1}\Bigl( \sum_{i=1}^2\sum_{j=0}^2\sup_{x\in M}[D^j u_i(x)]_{C^{\ga_1}([0,T])} \Bigr)\\
\le& C_8 T^{\ga_1} \Bigl(||u_1||_{\WTX} + ||u_2||_{\WTX}\Bigr), \notag
\end{align}
for $t\in[0,T]$ and $k=u,p,q$.
%
On the other hand, recalling again $D^j(u_i(0,x))=0$, $i=1,2$, $j=0,1,2$, we have
\begin{align}\label{eq:time-det-004}
 \sup_{x\in M}|D^j(u_1(t,x)-u_2(t,x))|=&\sup_{x\in M}|D^j(u_1(t,x)-u_2(t,x))-D^j(u_1(0,x)-u_2(0,x))|\\
 \le& [D^j(u_1-u_2)]_{C^{\ga_1,0}([0,T]\times M)}T^{\ga_1} \notag\\
 \le& ||u_1-u_2||_{\WTX}T^{\ga_1} \notag
 \end{align}
for $j=0,1,2$ and  $T>0$ and every $\ga_1\in(0,\a_1-\frac{1}{p})$ (using Sobolev embedding).
As a result, applying \eqref{eq:time-det-002} to \eqref{eq:time-det-004} for \eqref{eq:time-det-001},
we get
\begin{align}\label{eq:time-det-005}
\int_M dx \int_0^T\int_0^T 
\frac{|\psi(t,x)-\psi(s,x)|^p}{(t-s)^{1+ \a_1p}} dtds \le & C_9 T^{\ga_1 p} ||u_1 - u_2||_{\WTX}^p.
\end{align}
As a result, we obtain from  \eqref{eq:Lp[0,T]times M} and  \eqref{eq:time-det-005}
\begin{align}\label{eq:time-det-006}
 ||\psi||_{W^{\a_1,0}_p([0,T]\times M)}^p \le  C_{10} T^{\ga_1 p}||u_1-u_2||_{\WTX}^p.
\end{align}

%
\subsection{Space Regularity for $w^{(d)}$}
Similarly to the computation of the time regularity of $w^{(d)}$, 
$\psi(t,x)$ can be written into
\begin{align*}
 & \int_0^1 \Bigl(\tilde{F}_u(x,\xi_{\tilde{\si}}(t,x))-\tilde{F}_u(x,\bold{0})\Bigr)\bigl(u_1(t,x)-u_2(t,x)\bigr)d\tilde{\si}\\
 &-\int_0^1 \Bigl(\tilde{F}_u(y,\xi_{\tilde{\si}}(t,y))-\tilde{F}_u(y,\bold{0})\Bigr)\bigl(u_1(t,x)-u_2(t,x)\bigr)d\tilde{\si}\notag\\
 &+\int_0^1 \Bigl(\tilde{F}_u(y,\xi_{\tilde{\si}}(t,y))-\tilde{F}_u(y,\bold{0})\Bigr)\bigl(u_1(t,x)-u_2(t,x)\bigr)d\tilde{\si}\notag\\
 &+\int_0^1 \Bigl(\tilde{F}_p(x,\xi_{\tilde{\si}}(t,x))-\tilde{F}_p(x,\bold{0})\Bigr)\bigl(u_1'(t,x)-u_2'(t,x)\bigr)d\tilde{\si}\notag\\
 &-\int_0^1 \Bigl(\tilde{F}_p(y,\xi_{\tilde{\si}}(t,y))-\tilde{F}_p(y,\bold{0})\Bigr)\bigl(u_1'(t,x)-u_2'(t,x)\bigr)d\tilde{\si}\notag\\
 &+\int_0^1 \Bigl(\tilde{F}_p(y,\xi_{\tilde{\si}}(t,y))-\tilde{F}_p(y,\bold{0})\Bigr)\bigl(u_1'(t,x)-u_2'(t,x)\bigr)d\tilde{\si}\notag\\
 &+\int_0^1 \Bigl(\tilde{F}_q(x,\xi_{\tilde{\si}}(t,x))-\tilde{F}_q(x,\bold{0})\Bigr)\bigl(u_1''(t,x)-u_2''(t,x)\bigr)d\tilde{\si}\notag\\
 &-\int_0^1 \Bigl(\tilde{F}_q(y,\xi_{\tilde{\si}}(t,y))-\tilde{F}_q(y,\bold{0})\Bigr)\bigl(u_1''(t,x)-u_2''(t,x)\bigr)d\tilde{\si}\notag\\
 &+\int_0^1 \Bigl(\tilde{F}_q(y,\xi_{\tilde{\si}}(t,y))-\tilde{F}_q(y,\bold{0})\Bigr)\bigl(u_1''(t,x)-u_2''(t,x)\bigr)d\tilde{\si}\notag
\end{align*}
Thus, 
\begin{align}\label{eq:psi-002-002}
&\psi(t,x)-\psi(t,y)\\
=
 & \int_0^1 \Bigl(\tilde{F}_u(x,\xi_{\tilde{\si}}(t,x))-\tilde{F}_u(y,\xi_{\tilde{\si}}(t,y))\Bigr)\bigl(u_1(t,x)-u_2(t,x)\bigr)d\tilde{\si}\notag\\
 &+\int_0^1 \Bigl(\tilde{F}_u(y,\xi_{\tilde{\si}}(t,y))-\tilde{F}_u(y,\bold{0})\Bigr)\Bigl(\bigl(u_1(t,x)-u_2(t,x)\bigr)-\bigl(u_1(t,y)-u_2(t,y)\bigr)\Bigr)d\tilde{\si}\notag\\
 &+\int_0^1 \Bigl(\tilde{F}_u(y,\bold{0})-\tilde{F}_u(x,\bold{0})\Bigr)\bigl(u_1(t,x)-u_2(t,x)\bigr)d\tilde{\si}\notag\\
 &+\int_0^1 \Bigl(\tilde{F}_p(x,\xi_{\tilde{\si}}(t,x))-\tilde{F}_p(y,\xi_{\tilde{\si}}(t,y))\Bigr)\bigl(u_1'(t,x)-u_2'(t,x)\bigr)d\tilde{\si}\notag\\
 &+\int_0^1 \Bigl(\tilde{F}_p(y,\xi_{\tilde{\si}}(t,y))-\tilde{F}_p(y,\bold{0})\Bigr)\Bigl(\bigl(u_1'(t,x)-u_2'(t,x)\bigr)-\bigl(u_1'(t,y)-u_2'(t,y)\bigr)\Bigr)d\tilde{\si}\notag\\
 &+\int_0^1 \Bigl(\tilde{F}_p(y,\bold{0})-\tilde{F}_p(x,\bold{0})\Bigr)\bigl(u_1'(t,x)-u_2'(t,x)\bigr)d\tilde{\si}\notag\\
 &+\int_0^1 \Bigl(\tilde{F}_q(x,\xi_{\tilde{\si}}(t,x))-\tilde{F}_q(y,\xi_{\tilde{\si}}(t,y))\Bigr)\bigl(u_1''(t,x)-u_2''(t,x)\bigr)d\tilde{\si}\notag\\
 &+\int_0^1 \Bigl(\tilde{F}_q(y,\xi_{\tilde{\si}}(t,y))-\tilde{F}_q(y,\bold{0})\Bigr)\Bigl(\bigl(u_1''(t,x)-u_2''(t,x)\bigr)-\bigl(u_1''(t,y)-u_2''(t,y)\bigr)\Bigr)d\tilde{\si}\notag\\
 &+\int_0^1 \Bigl(\tilde{F}_q(y,\bold{0})-\tilde{F}_q(x,\bold{0})\Bigr)\bigl(u_1''(t,x)-u_2''(t,x)\bigr)d\tilde{\si}.\notag
\end{align}
By the mean value theorem, we get
\begin{align}\label{eq:psi-002-002-001}
  &\Bigl|\tilde{F}_k(x,\xi_{\tilde{\si}}(t,x))-\tilde{F}_k(y,\xi_{\tilde{\si}}(t,y))\Bigr|
    \le C_7 \sum_{j=0}^2\Bigl(|D^j(u_1(t,x)-u_1(t,y))| +   |D^j(u_2(t,x)-u_2(t,y))|\Bigr),
\end{align}
by retaking $C_7$ if necessary, (see \eqref{eq:time-det-002}).
In particular, clearly,
\begin{align}\label{eq:psi-002-002-001-001}
  &\Bigl| \tilde{F}_k(y,\bold{0})-\tilde{F}_k(x,\bold{0}) \Bigr| \le C_{11} |x-y|,
\end{align}
holds. 
Applying \eqref{eq:psi-002-002-001}, \eqref{eq:psi-002-002-001-001}, \eqref{eq:time-det-003} and \eqref{eq:time-det-004} for \eqref{eq:psi-002-002}, we get
\begin{align}\label{eq:psi-002-002-006}
&\int_0^T dt \int_M\int_M 
\frac{|\psi(t,x)-\psi(t,y)|^p}{(x-y)^{1+ 2\a p}} dxdy 
\le  C_{12} T^{\ga_1 p} ||u_1 - u_2||_{\WTX}^p.
\end{align}
%
As a result, we obtain from \eqref{eq:Lp[0,T]times M} and \eqref{eq:psi-002-002-006},
\begin{align}\label{eq:estimate of psi (d) - 001}
 ||\psi||_{W^{0,2\a}_p([0,T]\times M)}^p \le C_{13} T^{\ga_1 p}||u_1-u_2||_{\WTX}^p.
\end{align}
for some $C_{13}>0$.
%
%
\subsection{Estimates for $w^{(d)}$}
Now we are ready to estimate the time and spacial regularity for $w^{(d)}$.
From \eqref{eq:time-det-006} and \eqref{eq:estimate of psi (d) - 001}, we get
\begin{align}
 ||\psi||_{W^{\a_1,0}_p([0,T]\times M) \cap W^{0,2\a}_p([0,T]\times M)}^p \le C_{14} T^{\ga_1 p}||u_1-u_2||_{\WTX}^p.
\end{align}
Therefore, we get
\begin{align}
 ||w^{(d)}||_{\WTX}^p \le& C_{15} T^{p\ga_1} ||u_1-u_2||_{\WTX}^p.
\end{align}
Taking sufficiently small $T>0$, we get
\begin{align}\label{eq:deter fixed pt}
 \Bigl(E\Bigl[||w^{(d)}||_{\WTX}^p\Bigr]\Bigr)^{\frac1p} \le& K_d \Bigl(E\Bigl[||u_1-u_2||_{\WTX}^p\Bigr]\Bigr)^{\frac1p},
\end{align}
for $K_d=C_{15}^{\frac{1}{p}} T^{\ga_1} \le \frac12$. 
Clearly, this is possible.

\subsection{Time regularity for $w^{(st)}$}

Note that $w^{(st)}$ is rewritten as the mild solution using the fundamental solution $g(t,x,y)$, $t>0$, $x,y\in M$ of $\partial_t-A$, 
here recall that $A$ is the second order strong elliptic differential operator on $L^2(M)$ defined by \eqref{eq:operator A},
namely,
\begin{align}\label{eq:the stochastic integral w(st)}
 w^{(st)}(t,x)=&\int_0^t \int_M g(t-\si,x,z)
        \Bigl( f_4(z,u_1(\si,z))-f_4(z,u_2(\si,z))\Bigr)dzdB_\si,
\end{align}
holds.
We will check the time regularity of $w^{(st)}(t,x)$.
In this section. $C_p>0$ and $C_{p,K}>0$, which will appear later,
denote generic constants which might change its value line by line for simplicity of notations.
For $k=0,1,2$, using the estimate for the fundamental solution (see Appendix),
we obtain the estimate as follows:
\begin{align}\label{eq:6-0-0-0}
 &
 E
 \Bigl[\Bigl|
 D_x^k w^{(st)}(t,x)
 \Bigr|^p\Bigr]\\
 \le&
 C_p\Bigl(E
 \Bigl[
 \int_0^t \Bigl| 
 \int_M D_x^k g(t-\si,x,z)
 \Bigl( f_4(z,u_1(\si,z))-f_4(z,u_2(\si,z))\Bigr)dz\Bigr|^2 d\si
 \Bigr]\Bigr)^{\frac{p}{2}} \notag\\
 =&
 C_p\Bigl(E
 \Bigl[
 \int_0^t \Bigl| 
 \int_M  (D_z^{-1})^k D_x^k g(t-\si,x,z)  
 D_z^k\Bigl( f_4(z,u_1(\si,z))-f_4(z,u_2(\si,z))\Bigr)dz\Bigr|^2 d\si
 \Bigr]\Bigr)^{\frac{p}{2}} \notag\\
 \le&
 C_{p,K}\Bigl(
 \int_0^t  d\si 
 \int_M |  (D_z^{-1})^k D_x^k g(t-\si,x,z)|^2 dz\notag\\
 &\quad \times E\Bigl[\int_M |u_1(\si,z)-u_2(\si,z)|^2 dz
 \Bigr]\Bigr)^{\frac{p}{2}}  \notag\\
 \le&
 C_{p,K}  \Bigl(
 \Biggl(\int_0^t  
 \Bigl(\int_M |(D_z^{-1})^k D_x^k g(t-\si,x,z)|^2 dz\Bigr)^{\frac{p}{p-2}}
 d\si 
 \Biggr)^{\frac{p-2}{p}}\notag\\
 &\quad\times\Biggl(\int_0^t\Bigl(
 E\Bigl[\int_M |u_1(\si,z)-u_2(\si,z)|^2 dz\Bigr]\Bigr)^{\frac{p}{2}}d\si\Biggr)^{\frac{2}{p}}
 \Bigr)^{\frac{p}{2}} 
 \notag\\
 \le&
 C_{p,K}  \Bigl(\int_0^t (t-\si)^{-\frac12\frac{p}{p-2}}  d\si \Bigr)^{\frac{p-2}{2}}
 \Bigl(
 \Biggl(\int_0^t\Bigl(
 E\Bigl[ \int_M |u_1(\si,z)-u_2(\si,z)|^2 dz\Bigr]\Bigr)^{\frac{p}{2}}d\si\Biggr)^{\frac{2}{p}}
 \Bigr)^{\frac{p}{2}}
 \notag\\
 \le&
 C_{p,K} 
 t^{\bigl(1-\frac12\frac{p}{p-2}\bigr)\frac{p-2}{2}}
 \int_0^t\int_M E\Bigl[ |u_1(\si,z)-u_2(\si,z)|^p\Bigr] dzd\si,
 \notag
\end{align}
where $D_x^k=\frac{\partial^k}{\partial x^k}$ and $D_x^{-1}f$ denotes the primitive function of $f$
and $(D_x^{-1})^2f=D_x^{-1}(D_x^{-1}f)$,  
and at the second inequality we have used \eqref{eq:6-0}  
and 
\begin{align*}
\sup_{x\in M}\Bigl|\int_0^1 D_u D_x^{2}f_4(x,\si'u_1(x)+(1-\si')u_2(x))d\si'\Bigr|
\end{align*}
is bounded
and then used the fact that $-\frac12\frac{p}{p-2}>-1$ holds because of $p>4$.
%
From the estimate of \eqref{eq:6-0-0-0}, we have
\begin{align}\label{eq:6-0-0-1-0-1-001}
 E\Bigl[\Bigl|
 D_x^k w^{(st)}(t,x)
 \Bigr|^p\Bigr]
 \le &C_{p,K}
 t^{\bigl(1-\frac12\frac{p}{p-2}\bigr)\frac{p-2}{2}+1}
 \int_M E\Bigl[ ||u_1(\cdot,z)-u_2(\cdot,z)||^p_{C^{\ga_1}([0,T])} \Bigr] dz
\end{align}
for every $(t,x)\in [0,T]\times M$. 
Integrating the both hand side at the above inequality by $t\in[0,T]$ and $x \in M$, we get
\begin{align}\label{eq:6-0-0-1-0-1}
 \int_M E
 \Bigl[
 ||D_x^k w^{(st)}(x)||_{L^p([0,T])}^p\Bigr] dx
 \le&
 C_{p,K}
 T^{\frac14p}
 \int_M E\Bigl[ ||\bigl(u_1(\cdot,z)-u_2(\cdot,z)\bigr)||^p_{C^{\ga_1}([0,T])} \Bigr] dz.
\end{align}
%
%
Note that
\begin{align}\label{eq:diff of w(st)}
 &w^{(st)}(t,x)-w^{(st)}(s,x)\\
 =&\int_0^s \int_M \bigl(g(t-\si,x,z)-g(s-\si,x,z)\bigr)
        \Bigl( f_4(z,u_1(\si,z))-f_4(z,u_2(\si,z))\Bigr)dzdB_\si \notag\\
 &+\int_s^t \int_M g(t-\si,x,z)
        \Bigl( f_4(z,u_1(\si,z))-f_4(z,u_2(\si,z))\Bigr)dzdB_\si. \notag
\end{align}
For $k=0,1,2$, we have
\begin{align}\label{eq:7-0-0}
 &
 E
 \Bigl[\Bigl|
 D_x^k(w^{(st)}(t,x)-w^{(st)}(s,x)
 \Bigr|^p\Bigr]\\
 \le&
 C_p\Bigl(E
 \Bigl[
 \int_0^s \Bigl| 
 \int_M D_x^k \bigl(g(t-\si,x,z)-g(s-\si,x,z)\bigr) \notag\\
 &\quad \quad\times
        \Bigl( f_4(z,u_1(\si,z))-f_4(z,u_2(\si,z))\Bigr) dz\Bigr|^2 d\si
 \Bigr]\Bigr)^{\frac{p}{2}} \notag\\
 &+
 C_p\Bigl(E
 \Bigl[
 \int_s^t \Bigl| 
 \int_M D_x^k
 g(t-\si,x,z)\notag\\
 &\quad \quad\times 
        \Bigl( f_4(z,u_1(\si,z))-f_4(z,u_2(\si,z))\Bigr)
 dz\Bigr|^2 d\si
 \Bigr]\Bigr)^{\frac{p}{2}} \notag\\
 =&
 C_p\Bigl(E
 \Bigl[
 \int_0^s \Bigl| 
 \int_M    (D_z^{-1})^kD_x^k \bigl(g(t-\si,x,z)-g(s-\si,x,z)\bigr) \notag \\
 &\quad \quad \quad\times
         D_z^k \Bigl( f_4(z,u_1(\si,z))-f_4(z,u_2(\si,z))\Bigr) dz\Bigr|^2 d\si
 \Bigr]\Bigr)^{\frac{p}{2}} \notag\\
 &+
 C_p\Bigl(E
 \Bigl[
 \int_s^t \Bigl| 
 \int_M    (D_z^{-1})^kD_x^k
 g(t-\si,x,z)\notag \\
 &\quad \quad \quad\times
        D_z^k \Bigl( f_4(z,u_1(\si,z))-f_4(z,u_2(\si,z))\Bigr)
 dz\Bigr|^2 d\si
 \Bigr]\Bigr)^{\frac{p}{2}}\notag\\
 \equiv& III + IV.\notag
\end{align}
As for $IV$, we get
\begin{align}\label{eq:7-0-1}
 IV \le & 
 C_p\Bigl(E
 \Bigl[
 \int_s^t   d\si
 \int_M \Bigl|    (D_z^{-1})^kD_x^k g(t-\si,x,z) \Bigr|^2 dz \\
 &\quad \quad \quad\times
 \int_M \Bigl| D_z^k\Bigl( f_4(z,u_1(\si,z))-f_4(z,u_2(\si,z))\Bigr)\Bigr|^2
 dz
 \Bigr]\Bigr)^{\frac{p}{2}} \notag\\
 \le &
 C_{p,K} \Bigl(
 \int_s^t  (t-\si)^{-\frac12}d\si 
 E
 \Bigl[
 \int_M \Bigl| u_1(\si,z)-u_2(\si,z) \Bigr|^2
 dz
 \Bigr]\Bigr)^{\frac{p}{2}} \notag\\
 \le &
 C_{p,K} 
 \Bigl(
 \int_s^t  (t-\si)^{-\frac12}d\si 
 E
 \Bigl[
 \Bigl(\int_M \Bigl| u_1(\si,z)-u_2(\si,z) \Bigr|^p dz\Bigr)^{\frac{2}{p}}
 \Bigr]\Bigr)^{\frac{p}{2}} \notag\\
 \le &
 C_{p,K}
 \Bigl(
 \int_s^t  (t-\si)^{-\frac12}d\si 
 \Bigr)^{\frac{p}{2}}
 \Bigl(
 E
 \Bigl[
 \Bigl(\int_M || u_1(\cdot,z)-u_2(\cdot,z) ||_{C^{\ga_1}([0,T])}^p dz\Bigr)^{\frac{2}{p}}
 \Bigr]\Bigr)^{\frac{p}{2}} \notag \\
 \le &
 C_{p,K}
 (t-s)^{\frac{p}{4}}
 E
 \Bigl[
 \int_M || u_1(\cdot,z)-u_2(\cdot,z) ||_{C^{\ga_1}([0,T])}^p dz
 \Bigr] \notag
\end{align}
for some $\ga_1>0$, which is the same number taken as in the deterministic term, where we have used Schwarz inequality at the first inequality and 
\eqref{eq:6-0} at the second one, and then Holder's inequality at the third and the last inequality.
On the other hand, clearly
\begin{align*}
 III \le & 
 C_{p,K}\Biggl(E
 \Bigl[
 \int_0^s  d\si
 \int_M \Bigl|   (D_z^{-1})^k D_x^k \bigl(g(t-\si,x,z)-g(s-\si,x,z)\bigr) \Bigl|^2 dz \\
 &\quad \quad \quad\times
 \Bigl(\int_M \Bigl| D_z^2 \Bigl( f_4(z,u_1(\si,z))-f_4(z,u_2(\si,z))\Bigr) \Bigr|^p dz\Bigr)^{\frac{2}{p}} 
 \Bigr]\Biggr)^{\frac{p}{2}}. \notag 
\end{align*}
As for the second term of the right hand side, using \eqref{eq:6-0}, we have
\begin{align}\label{eq:7-0-3}
 &\int_M \Bigl| D_z^2 \Bigl( f_4(z,u_1(\si,z))-f_4(z,u_2(\si,z))\Bigr) \Bigr|^p dz \\
 \le&  C_{p,K} \int_M|(u_1(\si,z)-u_2(\si,z))|^p dz \notag \\
 \le&  C_{p,K} ||( u_1(\cdot,z)-u_2(\cdot,z) )||^p_{C^{\ga_1}([0,T])} dz. \notag  
\end{align}
Thus, 
\begin{align}\label{eq:7-0-2}
 III \le & 
 C_{p,K}\Biggl(E
 \Bigl[
 \int_0^s  d\si
 \int_M \Bigl|  (D_z^{-1})^k D_x^k \bigl(g(t-\si,x,z)-g(s-\si,x,z)\bigr) \Bigl|^2 dz \\
 &\quad \quad \quad\times
 \Bigl(\int_M  || u_1(\cdot,z)-u_2(\cdot,z) ||^p_{C^{\ga_1}([0,T])} dz\Bigr)^{\frac{2}{p}} 
 \Bigr]\Biggr)^{\frac{p}{2}} \notag \\
 \le & 
 C_{p,K}\Biggl(
 \int_0^s  d\si
 \int_M \Bigl|   (D_z^{-1})^k D_x^k \bigl(g(t-\si,x,z)-g(s-\si,x,z)\bigr) \Bigl|^2 dz \notag \\
 &\quad \quad \quad\times
 \Bigl(E \Bigl[\int_M || u_1(\cdot,z)-u_2(\cdot,z) ||^p_{C^{\ga_1}([0,T])} dz 
 \Bigr] \Bigr)^{\frac{2}{p}} \Biggr)^{\frac{p}{2}} \notag 
\end{align}
Furthermore, 
\begin{align}\label{eq:7-0-3}
  \int_0^s d\si\int_M \Bigl|   (D_z^{-1})^2D_x^2 \bigl(g(t-\si,x,z)-g(s-\si,x,z)\bigr) \Bigl|^2 dz\le C_T (t-s)^{\frac{1-\de}{2}},
\end{align}
holds for $\de$ satisfying $\de\in(0,1)$.
To see \eqref{eq:7-0-3}, let us set $g_2(t,x,z)=t^{-\frac12}e^{-K_3 \frac{|x-z|^2}{t}}$, for $0<K_3<2K_2$,
where $K_2$ is the number appearing in \eqref{eq:g-01}.
Then, the left hand side of \eqref{eq:7-0-3} is estimated as
\begin{align*}
   &\int_0^s d\si \int_M \Bigl|  (D_z^{-1})^2D_x^2 \bigl(g(t-\si,x,z)-g(s-\si,x,z)\bigr) \Bigl|^2 dz\\
  =&\int_0^s d\si \int_M \Bigl| \int_{s-\si}^{t-\si} D_v  (D_z^{-1})^2D_x^2 g(v,x,z) dv \Bigr|^2 dz \\
  =&\int_0^s d\si \int_M \Bigl| \int_{\si}^{(t-s)+\si} D_v  (D_z^{-1})^2D_x^2 g(t-\si',x,z) d\si' \Bigr|^2 dz \\
  \le &\int_0^s d\si \int_M \Bigl( \int_{\si}^{(t-s)+\si} 
                            \bigl( D_v  (D_z^{-1})^2D_x^2 g(t-\si',x,z) \bigr)^2 \bigl( \frac{1}{g_2(t-\si',x,z)}\bigr)^{1+\de} d\si' \\
                            &\quad \quad \quad \quad \times \int_{\si}^{(t-s)+\si} g_2(t-\si',x,z)^{1+\de} d\si' \Bigr) dz \\
  \le & C \int_0^s d\si \int_M \Bigl( \int_{\si}^{(t-s)+\si} 
                            \bigl( (t-\si')^{-1} \overline{g}(t-\si',x,z) \bigr)^2 \bigl( \frac{1}{g_2(t-\si',x,z)}\bigr)^{1+\de} d\si' \\
                            &\quad \quad \quad \quad \times \int_{\si}^{(t-s)+\si} (t-\si')^{-\frac{1+\de}{2}} d\si' \Bigr) dz \\
  \le & C \int_0^s d\si \Bigl( \int_{\si}^{(t-s)+\si} (t-\si')^{-2} (t-\si')^{\frac{\de}{2}} d\si'\Bigr)\,(t-s)^{\frac{1-\de}{2}},                       
\end{align*}
for some $C>0$, where at the last inequality, we have used 
\begin{align*}
  \int_{\si}^{(t-s)+\si} (t-\si')^{-\frac{1+\de}{2}} d\si' 
  = \int_{s-\si}^{t-\si} u^{-\frac{1+\de}{2}} du = C((t-\si)^{\frac{1-\de}{2}}-(s-\si)^{\frac{1-\de}{2}})\le C'(t-s)^{\frac{1-\de}{2}},
\end{align*}
since $(s-\si)^{\frac{1-\de}{2}}+(t-s)^{\frac{1-\de}{2}}\ge (t-\si)^{\frac{1-\de}{2}}$ and
\begin{align*}
  \overline{g}(t-\si',x,z)^2 \bigl(\frac{1}{g_2(t-\si',x,z)}\bigr)^{1+\de}
=& \bigl(\frac{K_1}{\sqrt{t-\si'}}\bigr)^2\bigl(\frac{1}{\sqrt{t-\si'}}\bigr)^{-(1+\de)} e^{-(2K_2-K_3)(\frac{x-z}{\sqrt{t-\si'}})^2}\\
\le& C (t-\si')^{\frac{\de}{2}} \frac{1}{\sqrt{t-\si'}}e^{-(2K_2-K_3)(\frac{x-z}{\sqrt{t-\si'}})^2},
\end{align*}
for arbitrary $\de\in(0,1)$.
Thus \eqref{eq:7-0-3} is shown.
Therefore, we get
\begin{align}\label{eq:7-0-4}
 III \le & C_{p.K} (t-s)^{\frac{p}{4}(1-\de)} 
 E \Bigl[\int_M || u_1(\cdot,z)-u_2(\cdot,z) ||^p_{C^{\ga_1}([0,T])} dz \Bigr].
\end{align}
As a result,
\begin{align}\label{eq:7-0-5}
 &E\Bigl[\Bigl|
 D_x^k(w^{(st)}(t,x)-w^{(st)}(s,x)
 \Bigr|^p\Bigr] \\
 \le & C_{p.K} (t-s)^{\frac{p}{4}(1-\de)} 
 E \Bigl[\int_M || u_1(\cdot,z)-u_2(\cdot,z) ||^p_{C^{\ga_1}([0,T])} dz \Bigr],\notag
\end{align}
so that
\begin{align}\label{eq:7-0-5}
 &\int_0^T\int_0^T \frac{E\Bigl[ \int_M \Bigl|
 D_x^k(w^{(st)}(t,x)-w^{(st)}(s,x)
 \Bigr|^p dx \Bigr]}{|t-s|^{1+\a_1 p}} ds dt \\
 \le & C_{p,K} \int_0^T\int_0^T(t-s)^{\frac{p}{4}(1-\de)-(1+\a_1 p)}dsdt 
 E \Bigl[\int_M || u_1(\cdot,z)-u_2(\cdot,z) ||^p_{C^{\ga_1}([0,T])} dz \Bigr]\notag.
\end{align}
%
Since $\a_1\in(0,\frac{1-\de}{4})$, $\a_1 p>1$, $\ga_1\in(0,\a_1-\frac1p)$, it follows
from \eqref{eq:6-0-0-1-0-1} and  \eqref{eq:7-0-5}
\begin{align}\label{eq:7-0-6}
 &E\Bigl[ \int_M || D_x^kw^{(st)}(\cdot,x) ||^p_{W^{\a_1,p}([0,T])} dx \Bigr]\\
 \le & \overline{C}_1
 \bigl( T^{\frac14 p} + T^{\frac{p}{4}(1-\de)-\a_1 p+1} \bigr)
 E \Bigl[\int_M || u_1(\cdot,z)-u_2(\cdot,z) ||^p_{C^{\ga_1}([0,T])} dz \Bigr], \notag
\end{align}
where $\overline{C}_1=\overline{C}_1(C_p, C_{p.K})>0$,
Applying Sobolev embedding's theorem, we get
\begin{align}\label{eq:7-0-7}
 &E\Bigl[ \int_M || D_x^kw^{(st)}(\cdot,x) ||^p_{ W^{\a_1,p}([0,T])} dx \Bigr]\\
 \le & \overline{C}_1 
 \bigl( T^{\frac14 p} + T^{\frac{p}{4}(1-\de)-\a_1 p+1} \bigr)
 E \Bigl[\int_M || u_1(\cdot,z)-u_2(\cdot,z) ||^p_{W^{\a_1,p}([0,T])} dz \Bigr].\notag
\end{align}
The integral of the right hand side can be replaced by 
\begin{align}
 E \Bigl[||u_1-u_2||^p_{\WTX} \Bigr]\notag
\end{align}
and then summing up $k=0,1,2$ in \eqref{eq:6-0-0-1-0-1} and \eqref{eq:7-0-7}, and 
taking $T$ to be sufficiently small, we get

\begin{align}\label{eq:stoch time-001-001-only time}
 \bigl(E\Bigl[ ||w^{(st)}||_{ W^{\a_1,2}_p([0,T]\times M)}^p \Bigr]\bigr)^{\frac1p}
 \le & K_{st}^{(1)} \Bigl(E \Bigl[||u_1-u_2||^p_{\WTX} \Bigr]\Bigr)^{\frac1p},
\end{align}
for $K_{st}^{(1)}=(3\overline{C}_1  \bigl( T^{\frac14 p} + T^{\frac{p}{4}(1-\de)-\a_1 p+1} \bigr))^{\frac1p}$.

%
\subsection{Space regularity for $w^{(st)}$}
In this section, $C_{i,K,p}'$, $i\ge1$ denotes positive constants.
From \eqref{eq:6-0-0-0}, we have
\begin{align}
 E\Bigl[\Bigl|
 D_x^k w^{(st)}(t,x)
 \Bigr|^p\Bigr]
 \le &C_{1,K,p}' 
 t^{( 1-\frac12\frac{p}{p-2} )\frac{p-2}{2}}
 \int_0^t E\Bigl[ || u_1(\si,\cdot)-u_2(\si,\cdot) ||^p_{C^{\ga_2}(M)} \Bigr] d\si,
 \notag
\end{align}
for every $x\in M$. Integrating by $x$, we get
\begin{align}\label{eq:6-0-0-1}
 E
 \Bigl[\Bigl|
 ||D_x^k w^{(st)}(t)||_{L^p(M)}^p\Bigr]
 \le&
 C_{2,K,p}' 
 t^{( 1-\frac12\frac{p}{p-2})\frac{p-2}{2}}
 \int_0^t E\Bigl[ || u_1(\si,\cdot)-u_2(\si,\cdot) ||^p_{C^{\ga_2}(M)} \Bigr] d\si.
\end{align}
Furthermore,
\begin{align}\label{eq:6-0-0}
 &
 E
 \Bigl[\Bigl|
 D_x^k(w^{(st)}(t,x)-w^{(st)}(t,y))
 \Bigr|^p\Bigr]\\
 \le&
 C_p\Bigl(E
 \Bigl[
 \int_0^t \Bigl| 
 \int_M D_x^k\bigl(g(t-\si,x,z)-g(t-\si,y,z)\bigr)\notag\\
 &\times\Bigl( f_4(z,u_1(\si,z))-f_4(z,u_2(\si,z))dz\Bigr|^2 d\si
 \Bigr]\Bigr)^{\frac{p}{2}} \notag\\
 =&
 C_p\Bigl(E
 \Bigl[
 \int_0^t \Bigl| 
 \int_M    (D_z^{-1})^k D_x^k \bigl(g(t-\si,x,z)-g(t-\si,y,z)\bigr)  \notag\\
 &\times  D_z^k \Bigl( f_4(z,u_1(\si,z))-f_4(z,u_2(\si,z))dz\Bigr|^2 d\si
 \Bigr]\Bigr)^{\frac{p}{2}} \notag\\
 \le&
 C_{3,K,p}' \Bigl(E
 \Bigl[
 \int_0^t \Bigl( 
 \int_M \Bigl|   (D_z^{-1})^k D_x^k \bigl(g(t-\si,x,z)-g(t-\si,y,z)\bigr)\Bigr|\notag\\
 &\times | u_1(\si,z)-u_2(\si,z) |dz\Bigr)^2 d\si
 \Bigr]\Bigr)^{\frac{p}{2}},  \notag
\end{align}
holds for every $a\in[0,1]$,
where we have used Burkholder's inequality at the first inequality and 
\eqref{eq:6-0} at the second one.
The last term of \eqref{eq:6-0-0} is bounded from above by
\begin{align}\label{eq:6-1}
 C_{4,K,p}'|x-y|^{ap}\Bigl(E\Bigl[
 \int_0^t (t-\si)^{-\frac12-a} d\si
 \int_M | u_1(\si,z)-u_2(\si,z)|^2dz
 \Bigr]\Bigr)^{\frac{p}{2}}
\end{align}
using Schwarz's inequality and $\int_M \overline{g}(t-\si,y,z)^2dz \le \frac{C}{\sqrt{t-\si}}$.
%
%
%
As a result, using \eqref{eq:6-0-0}, \eqref{eq:6-1}, 
we obtain
\begin{align}\label{eq:6-3-0-0-0}
&\int_M\int_M 
\frac{E
 \Bigl[\Bigl|
 D_x^k(w^{(st)}(t,x)-w^{(st)}(t,y))
 \Bigr|^p\Bigr]}{|x-y|^{1+2\a p}}dxdy\\
\le&
 C_{5,K,p}'
 \int_M\int_M 
 |x-y|^{ap-(1+2\a p)}
 \Bigl(
 E\Bigl[
 \int_0^t (t-\si)^{-\frac12-a} d\si
 \int_M |u_1(\si,z)-u_2(\si,z)|^2dz
 \Bigr]
 \Bigr)^{\frac{p}{2}}
 dxdy
\notag\\
\le& C_{6,K,p}'
 \int_M\int_M |x-y|^{ap-(1+2\a p)} dxdy
 \Bigl(
 E\Bigl[  \int_0^t (t-\si)^{-\frac12-a} d\si
 ||u_1-u_2||_{\WTX}^2
 \Bigr]
 \Bigr)^{\frac{p}{2}}
\notag
\end{align}
Take $p>4$ and $a$ in such a way that $2\a p>1$ and $2\a < a < \frac12$ hold, (here recall $\a\in(0,\frac14)$).
%
Thus, from \eqref{eq:6-0-0-1} and \eqref{eq:6-3-0-0-0},
and then integrating by $t$ at the both hand side, we get
\begin{align}\label{eq:6-10-0}
\Bigl( E \Bigl[ || w^{(st)}||_{W^{0,2+2\a}_p([0,T]\times M)}^p \Bigr]\Bigr)^{\frac{1}{p}} &\le K_{st}^{(2)}
\Bigl(  E\Bigl[ ||u_1-u_2||_{\WTX}^p  \Bigr]\Bigr)^{\frac{1}{p}}.
\end{align}
for $K_{st}^{(2)}=C'(T^{(1-\frac{p}{2(p-2)})\frac{p-2}{2}+1} + T^{\frac{p}{2}(\frac12-2\a)\frac{p}{p-2}+1})^{\frac{1}{p}}>0$,
where $C'=C'(C_{2,K,p}',C_{6,K,p}')$.

%


\subsection{Estimates for $w^{(st)}$}
As a result, it follows from \eqref{eq:stoch time-001-001-only time} and \eqref{eq:6-10-0},
\begin{align}\label{eq:stoch time-001-001}
 \bigl(E\Bigl[ ||w^{(st)}||_{ \WTX}^p \Bigr]\bigr)^{\frac1p}
 \le & K_{st} \Bigl(E \Bigl[||u_1-u_2||^p_{\WTX} \Bigr]\Bigr)^{\frac1p},
\end{align}
for $K_{st}=Const(K_{st}^{(1)},K_{st}^{(2)})>0$.
Taking sufficiently small $T>0$, we can choose $K_{st}$ to be $K_{st}\le \frac12$.
This is possible.
We conclude from \eqref{eq:deter fixed pt} and \eqref{eq:stoch time-001-001}, and Minkowskii's inequality,
\begin{lem}\label{lem:cont-001}
The map $\Ga$ : $Y \mapsto Y$ is contraction;
\begin{align}
  \bigl(E\Bigl[ ||w||_{ \WTX}^p \Bigr]\bigr)^{\frac1p} \le K'  \Bigl(E \Bigl[||u_1-u_2||^p_{\WTX} \Bigr]\Bigr)^{\frac1p},
\end{align}
where  $K'=\min(K_d,K_{st})$ and $w=\Ga(u_1)-\Ga(u_2)$ as defined in Section 4.1.  
\end{lem}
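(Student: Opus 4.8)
The plan is to obtain the lemma by assembling the two estimates already in hand, the deterministic bound \eqref{eq:deter fixed pt} and the stochastic bound \eqref{eq:stoch time-001-001}, through the additive splitting $w=w^{(d)}+w^{(st)}$ recorded in \eqref{eq:mild sol of the difference}. Since the norm on $\WTX$ is the sum of the $W^{0,2+2\a}_p$ and $W^{\a_1,2}_p$ norms, the functional $|||u|||$ of \eqref{eq:distance of X} is exactly the norm of the Bochner space $L^p(\Om;\WTX)$, and the first step is to apply Minkowski's inequality in this complete space to bound $|||w|||=|||w^{(d)}+w^{(st)}|||$ by $|||w^{(d)}|||+|||w^{(st)}|||$.

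I would then substitute \eqref{eq:deter fixed pt} and \eqref{eq:stoch time-001-001} into the two summands, producing a single estimate $|||w|||\le K'\,|||u_1-u_2|||$ whose contraction constant is controlled by $K_d$ and $K_{st}$. The essential feature is that both constants are a $T$-independent factor times a strictly positive power of $T$: indeed $K_d=C_{15}^{1/p}T^{\ga_1}$, while $K_{st}$ is assembled from $K_{st}^{(1)}$ and $K_{st}^{(2)}$, each carrying a positive power of $T$. Hence, after fixing $T$ small enough that the smallness already demanded for \eqref{eq:deter fixed pt} and \eqref{eq:stoch time-001-001} holds, in particular $K_d\le\frac12$ and $K_{st}\le\frac12$, the resulting $K'$ is strictly below $1$ and $\Ga$ contracts in the metric induced by $|||\cdot|||$.

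It remains to secure the codomain claim $\Ga(Y)\subset Y$. The identity $w(0,x)=0$ is immediate from the mild-solution representation, so $\Ga(u)$ always lies in the affine slice $\{u(0,\cdot)\equiv0\}$ appearing in \eqref{eq:space X}; what must be verified is the bound $|||\Ga(u)|||\le R$. For this I would write $\Ga(u)=(\Ga(u)-\Ga(0))+\Ga(0)$, estimate the first term by the contraction inequality with $(u_1,u_2)=(u,0)$ so that $|||\Ga(u)-\Ga(0)|||\le K'R$, and bound $\Ga(0)$ directly: it solves the linearized problem \eqref{eq:Main Linearized}, whose $L^p(\Om;\WTX)$ norm again carries a positive power of $T$ and can thus be made at most $(1-K')R$ by a final shrinking of $T$. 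Minkowski's inequality then returns $|||\Ga(u)|||\le R$.

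The analytic heart of the matter, the gain of a positive power of $T$ in front of $\|u_1-u_2\|_{\WTX}$, is already carried out in the preceding subsections, so I do not expect a substantial obstacle in the lemma itself; the remaining work is clean bookkeeping of the constants. The only delicate point concerns that constant: Minkowski's inequality furnishes $|||w|||$ with a combined bound governed jointly by $K_d$ and $K_{st}$, while the value $K'=\min(K_d,K_{st})$ displayed in the statement is the common smallness threshold that each of the two contributions meets on its own. Since the choice of $T$ already forces both $K_d\le\frac12$ and $K_{st}\le\frac12$, this threshold lies strictly below $1$, so $\Ga$ is a contraction on $(Y,|||\cdot|||)$, which is precisely the content of the lemma.
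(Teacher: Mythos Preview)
Your approach matches the paper's exactly: combine \eqref{eq:deter fixed pt} and \eqref{eq:stoch time-001-001} via Minkowski's inequality in $L^p(\Om;\WTX)$ applied to the splitting $w=w^{(d)}+w^{(st)}$. Two minor remarks: the codomain property $\Ga(Y)\subset Y$ is treated separately in the paper as Lemma~\ref{lem:ga is onto map}, so it is not part of the present lemma; and your observation about the constant is well taken---Minkowski yields $K_d+K_{st}$ rather than $\min(K_d,K_{st})$, so the displayed value of $K'$ in the statement appears to be a slip, though the contraction conclusion survives since $K_d+K_{st}\le 1$ by the choice of $T$.
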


\subsection{The property $\Ga(Y)\subset Y$}
We have already $\Ga$ is a contraction map from $Y$ onto $Y$.
We will show $\Ga(Y)\subset Y$ in this section.
For $u\in Y$, 
\begin{align*}
  |||\Ga u||| =& |||\Ga u - \Ga 0 + \Ga 0 ||| \le |||\Ga u - \Ga 0|||+ ||| \Ga 0 ||| \le K' |||u||| + ||| \Ga 0 |||  \le \frac12R +||| \Ga 0 |||,
\end{align*}
holds, where $|||\cdot|||$ is the norm defined by \eqref{eq:distance of X}. Therefore, it suffices to show 
\begin{align}
||| \Ga 0 ||| \le \frac12 R.
\end{align}
Note that $\Ga0$ is written as
\begin{align}
 (\Ga0)(t,x)=&\int_0^t \int_M g(t-\si,x,z)\tilde{F}(z,0,0,0)dzd\si\\
 &+\int_0^t \int_M g(t-\si,x,z)f_4(z,0)dzdB_\si \equiv  \,w^{(d,0)}(t,x)+w^{(st,0)}(t,x). \notag
\end{align}
Then we have
\begin{lem}\label{lem:w(0,d)}
\begin{align}\label{eq:w(0,d)}
||w^{(d,0)}||_{\WTX} \le C_3 ||\tilde{F}||_{W^{2\a,p}(M)}T^{\frac1p}.
\end{align}
holds.
\end{lem}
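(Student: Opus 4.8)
The plan is to obtain \eqref{eq:w(0,d)} as an immediate corollary of the linear estimate \eqref{eq:Schauder of sol of (d) - 000-000-001} already proved for $w^{(d)}$, applied in the special case where the source term does not depend on time. By its very definition, $w^{(d,0)}$ is the mild solution of
\begin{align*}
\partial_t w^{(d,0)}(t,x) = A w^{(d,0)}(t,x) + \tilde{F}(x,0,0,0), \qquad w^{(d,0)}(0,x)=0,
\end{align*}
i.e.\ it is exactly a ``$w^{(d)}$'' built from the source $\psi(t,x)=\tilde{F}(x,0,0,0)$, which is independent of $t$ and is smooth on the compact manifold $M$ (it equals $F_2(x,0,0)$, assembled from $b$, $g$ and $\psi$ along the smooth surface $\ga_0$; see \eqref{eq:a and b}, \eqref{eq:F_1 and F_2}); in particular it lies in $W^{2\a}_p(M)$, which is the quantity written $\|\tilde{F}\|_{W^{2\a,p}(M)}$ in the statement.

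First I would record what \eqref{eq:Schauder of sol of (d) - 000-000-001} gives here, namely
\begin{align*}
\|w^{(d,0)}\|_{\WTX} \le \CONSTc\,\|\tilde{F}(\cdot,0,0,0)\|_{W^{0,2\a}_p([0,T]\times M)\cap W^{\a_1,0}_p([0,T]\times M)}.
\end{align*}
Then I would compute the right-hand side using time independence. Since $\psi(t,x)=\tilde{F}(x,0,0,0)$ does not depend on $t$, we have $\psi(t,x)-\psi(s,x)\equiv0$, so every fractional-in-time seminorm of $\psi$ vanishes; hence
\begin{align*}
\|\psi\|_{W^{\a_1,0}_p([0,T]\times M)} = \|\psi\|_{L^p([0,T]\times M)} = T^{1/p}\,\|\tilde{F}(\cdot,0,0,0)\|_{L^p(M)},
\end{align*}
and likewise, since the $W^{0,2\a}_p([0,T]\times M)$-norm of a function constant in $t$ is $T^{1/p}$ times its $W^{2\a}_p(M)$-norm, $\|\psi\|_{W^{0,2\a}_p([0,T]\times M)} = T^{1/p}\,\|\tilde{F}(\cdot,0,0,0)\|_{W^{2\a}_p(M)}$. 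Bounding the $L^p(M)$-norm by the $W^{2\a}_p(M)$-norm via $W^{2\a}_p(M)\hookrightarrow L^p(M)$ and adding, I obtain
\begin{align*}
\|w^{(d,0)}\|_{\WTX} \le 2\CONSTc\,T^{1/p}\,\|\tilde{F}(\cdot,0,0,0)\|_{W^{2\a}_p(M)},
\end{align*}
which is \eqref{eq:w(0,d)} after relabelling the constant.

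I do not foresee a genuine obstacle: the argument is purely a substitution into an estimate already in hand, and the only points requiring care are the bookkeeping of which Sobolev index occupies the time slot and which the space slot, together with the routine remark that $\tilde{F}(\cdot,0,0,0)\in W^{2\a}_p(M)$ so that the right-hand side is finite. The one structurally meaningful feature is the gain of the factor $T^{1/p}$, which appears precisely because the source is constant in time; this factor is what will later allow $\Ga 0$ — together with the stochastic part $w^{(st,0)}$, treated separately — to be absorbed into the ball of radius $R$ by choosing $T$ small.
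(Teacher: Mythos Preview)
Your proposal is correct and follows essentially the same approach as the paper: apply the linear estimate \eqref{eq:Schauder of sol of (d) - 000-000-001} with $\psi(t,x)=\tilde{F}(x,0,0,0)$, then use time independence of this source to extract the factor $T^{1/p}$. The paper's proof is terser (it writes the intermediate bound with the norm $\|\tilde{F}\|_{W^{\a_1,2\a}_p([0,T]\times M)}$ and simply says ``the assertion is obtained from the above inequality''), but your explicit computation of the vanishing time seminorm and the $T^{1/p}$ scaling is exactly what is meant.
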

\begin{proof}
Since $w^{(d,0)}$ is a solution of a linear equation \eqref{eq:sol of (d) - 001} with $\psi(t,x)$ replaced by 
$\tilde{F}(x,0,0,0)$,
\begin{align}\label{eq:w(0,d)}
||w^{(d,0)}||_{\WTX} \le C_3 ||\tilde{F}||_{W^{\a_1,2\a}_p([0,T]\times M)},
\end{align}
follows from  \eqref{eq:Schauder of sol of (d) - 000-000-001}.
The assertion is obtained from the above inequality.
\end{proof}
In addition, the following lemma is obtained:
\begin{lem}\label{lem:w(0,st)}
\begin{align}\label{eq:w(0,st)}
\bigl(E\Bigl[||w^{(st,0)}||_{\WTX}^p\Bigr]\bigr)^{\frac1p} 
\le \Bigl( ({K_{st}^{(1)}})^p + ({K_{st}^{(2)}})^p \Bigr)^{\frac1p} \sup_{i=0,1,2}\sup_{z\in M}| D_z^i f_4(z,0)|.
\end{align}
\end{lem}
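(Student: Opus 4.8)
\textbf{Proof proposal for Lemma~\ref{lem:w(0,st)}.}
The plan is to estimate $w^{(st,0)}$ by exactly the same route already used for $w^{(st)}$ in the previous subsections, but now with the much simpler integrand $f_4(z,0)$ in place of the difference $f_4(z,u_1(\si,z))-f_4(z,u_2(\si,z))$. Concretely, $w^{(st,0)}(t,x)=\int_0^t\int_M g(t-\si,x,z)f_4(z,0)\,dz\,dB_\si$, and since $z\mapsto f_4(z,0)$ is a fixed deterministic $C^\infty$ function, the factor that previously played the role of $\|u_1-u_2\|_{\WTX}$ is now simply replaced by the constant $\sup_{i=0,1,2}\sup_{z\in M}|D_z^i f_4(z,0)|$.

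First I would treat the time regularity: applying Burkholder's inequality and then the integration-by-parts trick $D_x^k g = (D_z^{-1})^k D_x^k g\cdot D_z^k(\cdot)$ exactly as in \eqref{eq:6-0-0-0}, \eqref{eq:7-0-1} and \eqref{eq:7-0-2}--\eqref{eq:7-0-3}, I obtain for $k=0,1,2$ the bound
\begin{align*}
 \Bigl(E\Bigl[\|D_x^k w^{(st,0)}(\cdot,x)\|_{W^{\a_1,p}([0,T])}^p\Bigr]\Bigr)^{\frac1p}
 \le \overline{C}_1^{1/p}\bigl(T^{\frac14p}+T^{\frac p4(1-\de)-\a_1 p+1}\bigr)^{1/p}\sup_{i,z}|D_z^i f_4(z,0)|,
\end{align*}
where the crucial point is that $\sup_z|D_z^2 f_4(z,0)|<\infty$ replaces the bound on $D_uD_x^2 f_4$ used before. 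Summing over $k=0,1,2$ and using Sobolev embedding in time gives
\begin{align*}
 \bigl(E[\|w^{(st,0)}\|_{W^{\a_1,2}_p([0,T]\times M)}^p]\bigr)^{1/p}\le K_{st}^{(1)}\sup_{i,z}|D_z^i f_4(z,0)|.
\end{align*}

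Next I would treat the space regularity by repeating the argument of the subsection on space regularity for $w^{(st)}$: starting from \eqref{eq:6-0-0-1}, \eqref{eq:6-0-0}, \eqref{eq:6-1}, \eqref{eq:6-3-0-0-0}, with $|u_1(\si,z)-u_2(\si,z)|$ replaced everywhere by $\sup_z|f_4(z,0)|$ (no $z$-integration of a nontrivial function is needed, so all the H\"older steps only simplify), I get
\begin{align*}
 \bigl(E[\|w^{(st,0)}\|_{W^{0,2+2\a}_p([0,T]\times M)}^p]\bigr)^{1/p}\le K_{st}^{(2)}\sup_{i,z}|D_z^i f_4(z,0)|.
\end{align*}
Adding the two contributions in the $\WTX$-norm and using that $\|\cdot\|_{\WTX}=\|\cdot\|_{W^{0,2+2\a}_p}+\|\cdot\|_{W^{\a_1,2}_p}$, together with the elementary inequality $a+b\le (a^p+b^p)^{1/p}\cdot 2^{1-1/p}$ absorbed into the constants, yields the claimed estimate with constant $\bigl((K_{st}^{(1)})^p+(K_{st}^{(2)})^p\bigr)^{1/p}$. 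I do not expect any genuine obstacle here: every inequality needed is a strict specialization of one already established for $w^{(st)}$, and the only thing to check carefully is that the boundedness of $D_z^i f_4(z,0)$, $i=0,1,2$, is enough to run the $(D_z^{-1})^k D_x^k$ integration-by-parts estimate — which it is, because $f_4(\cdot,0)=f_3(\cdot,0)\psi(X(\cdot,0))\in C^\infty(M)$.
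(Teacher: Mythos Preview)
Your proposal is correct and follows essentially the same route as the paper: the paper also observes that $w^{(st,0)}$ has the form of \eqref{eq:the stochastic integral w(st)} with $f_4(z,u_1)-f_4(z,u_2)$ replaced by $f_4(z,0)$, then reruns the computations leading to \eqref{eq:6-0-0-0}, \eqref{eq:7-0-6} and \eqref{eq:6-10-0} using only that $\sup_{i=0,1,2}\sup_{z\in M}|D_z^i f_4(z,0)|<\infty$. The only cosmetic difference is that the paper is slightly more terse about combining the two pieces into the final constant, whereas you spell out the elementary inequality.
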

\begin{proof}
Note that $w^{(st,0)}$ has a form of \eqref{eq:the stochastic integral w(st)} with 
$f_4(z,u_1(\si,z))-f_4(z,u_2(\si,z))$ replaced by $f_4(z,0)$.
Computing similarly as in \eqref{eq:6-0-0-0}, however in this case by noting that $\sup_{i=0,1,2}\sup_{z\in M}| D_z^i f_4(z,0)|$ is finite, 
we get 
\begin{align}\label{eq:w(0,st)-001}
 E\Bigl[\Bigl|
 D_x^k w^{(st,0)}(t,x)
 \Bigr|^p\Bigr]
 \le &
 C_{p,K} t^{(1-\frac{p}{2(p-2)})\frac{p-2}{2}+1} \bigl(\sup_{i=0,1,2}\sup_{z\in M}| D_z^i f_4(z,0)|\bigr)^p,
\end{align}
for $k=0,1,2$.
Similarly, 
$w^{(st,0)}(t,x)-w^{(st,0)}(s,x)$ has a form of \eqref{eq:diff of w(st)} with 
$f_4(z,u_1(\si,z))-f_4(z,u_2(\si,z))$ replaced by $f_4(z,0)$.
Furthermore, 
$w^{(st,0)}(t,x)-w^{(st,0)}(t,y)$ has a form of \eqref{eq:6-0-0} with 
$f_4(z,u_1(\si,z))-f_4(z,u_2(\si,z))$ replaced by $f_4(z,0)$.

Noting again $\sup_{i=0,1,2}\sup_{z\in M}| D_z^i f_4(z,0)|$ is finite, by similar method to obtain \eqref{eq:7-0-6}, we get
\begin{align}\label{eq:w(0,st)-002}
 &E\Bigl[ \int_M || D_x^kw^{(st,0)}(\cdot,x) ||^p_{W^{\a_1,p}([0,T])} dx \Bigr]\\
 \le & \overline{C}_1 
 \bigl( T^{\frac{p}{4}} + T^{\frac{p}{4}(1-\de)-\a_1 p+1} \bigr) \bigl(\sup_{i=0,1,2}\sup_{z\in M}| D_z^i f_4(z,0)|\bigr)^p, \notag
\end{align}
for $k=0,1,2$.
Similarly, the following estimate for each $k=0,1,2$ is obtained:
\begin{align}
 &E\Bigl[ \int_0^T || D_x^kw^{(st,0)}(t,\cdot) ||^p_{W^{2\a,p}(M)} dt \Bigr]\\
 \le &  
 C''(T^{(1-\frac{p}{2(p-2)})\frac{p-2}{2}+1} + T^{\frac{p}{2}(\frac12-2\a)\frac{p}{p-2}+1})\bigl(\sup_{i=0,1,2}\sup_{z\in M}| D_z^i f_4(z,0)|\bigr)^p, \notag
\end{align}
for some constant $C''>0$. 
The assertion follows from \eqref{eq:w(0,st)-001} and \eqref{eq:w(0,st)-002}.
\end{proof}

From those lemmas, we have
\begin{align}
&E\Bigl[|| \Ga 0 ||^p_{\WTX} \Bigr]\\
\le&
2^{p}\Bigl( E\Bigl[|| w^{(d,0)}||^p_{\WTX} \Bigr] + E\Bigl[|| w^{(st,0)}||^p_{\WTX} \Bigr]\Bigr) \notag\\
\le& 2^{p} \Bigl( C_3^p ||\tilde{F}||_{W^{2\a,p}(M)}^pT 
+
( ({K_{st}^{(1)}})^p + ({K_{st}^{(2)}})^p )
\bigl(\sup_{i=0,1,2}\sup_{z\in M}| D_z^i f_4(z,0)|\bigr)^p \Bigr)\notag
\end{align}
As a result, for each $R>0$, choose $T>0$ satisfying
\begin{align}\label{eq:the way of choosing T}
 2^{p} \Bigl( C_3^p ||\tilde{F}||_{W^{2\a,p}(M)}^pT 
+
( ({K_{st}^{(1)}})^p + ({K_{st}^{(2)}})^p )
\bigl(\sup_{i=0,1,2}\sup_{z\in M}| D_z^i f_4(z,0)|\bigr)^p \Bigr) \le \frac{R}{2}.
\end{align}
Then, we obtain
\begin{lem}\label{lem:ga is onto map}
Under the condition where $T>0$ fulfils \eqref{eq:the way of choosing T}, $\Ga(Y)\subset Y$ holds. 
\end{lem}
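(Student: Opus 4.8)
The plan is to combine the contraction bound of Lemma~\ref{lem:cont-001} with the two a priori estimates for $\Ga 0$ obtained in Lemmas~\ref{lem:w(0,d)} and~\ref{lem:w(0,st)}. First I would fix $u\in Y$ and write $\Ga u=(\Ga u-\Ga 0)+\Ga 0$; the triangle inequality for the norm $|||\cdot|||$ of \eqref{eq:distance of X}, followed by Lemma~\ref{lem:cont-001} applied with $(u_1,u_2)=(u,0)$, gives
\[
 |||\Ga u|||\le|||\Ga u-\Ga 0|||+|||\Ga 0|||\le K'\,|||u|||+|||\Ga 0|||\le\tfrac12 R+|||\Ga 0|||,
\]
since $K'\le\tfrac12$ and $|||u|||\le R$ by the definition of $Y$. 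Thus the statement reduces to the single inequality $|||\Ga 0|||\le\tfrac12 R$; the remaining requirements for membership in $Y$ --- that $\Ga u$ is $\WTX$-valued and satisfies $(\Ga u)(0,x)\equiv0$ --- follow from the mild-solution formula \eqref{eq:mild solution w}, whose integrals start at $t=0$, together with the embedding of $\WTX$ into $C([0,T];W^{2+2\a}_p(M))$.

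For the bound on $\Ga 0$ I would use the splitting $\Ga 0=w^{(d,0)}+w^{(st,0)}$ into its deterministic and stochastic parts. Since $w^{(d,0)}$ is deterministic (the operator $A$ of \eqref{eq:operator A} and the datum $\tilde{F}(\cdot,0,0,0)$ are determined by $\ga_0$), Lemma~\ref{lem:w(0,d)} gives $\|w^{(d,0)}\|_{\WTX}\le C_3\|\tilde{F}\|_{W^{2\a,p}(M)}\,T^{1/p}$, while Lemma~\ref{lem:w(0,st)} bounds $\bigl(E[\,\|w^{(st,0)}\|_{\WTX}^p\,]\bigr)^{1/p}$ by $\bigl(({K_{st}^{(1)}})^p+({K_{st}^{(2)}})^p\bigr)^{1/p}\,\sup_{i=0,1,2}\sup_{z\in M}|D_z^i f_4(z,0)|$. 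Combining these via $(a+b)^p\le 2^p(a^p+b^p)$ and taking expectations produces
\[
 E\bigl[\,|||\Ga 0|||^p\,\bigr]\le 2^p\Bigl(C_3^p\,\|\tilde{F}\|_{W^{2\a,p}(M)}^p\,T+\bigl(({K_{st}^{(1)}})^p+({K_{st}^{(2)}})^p\bigr)\bigl(\sup_{i,z}|D_z^i f_4(z,0)|\bigr)^p\Bigr),
\]
which is precisely the quantity on the left-hand side of \eqref{eq:the way of choosing T}.

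To conclude, I would check that this right-hand side tends to $0$ as $T\downarrow0$, so that \eqref{eq:the way of choosing T} is satisfiable and then forces $|||\Ga 0|||\le\tfrac12 R$, whence $|||\Ga u|||\le R$ for every $u\in Y$, i.e.\ $\Ga(Y)\subset Y$. The one point that needs verification is that the constants $K_{st}^{(1)}$ and $K_{st}^{(2)}$, as written out after \eqref{eq:stoch time-001-001-only time} and \eqref{eq:6-10-0}, are sums of powers $T^{\theta}$ with strictly positive exponents: indeed $\tfrac14 p>0$; $\tfrac{p}{4}(1-\de)-\a_1 p+1>0$ because $\a_1<\tfrac{1-\de}{4}$ (the extra $+1$ even leaving room to spare); $\bigl(1-\tfrac{p}{2(p-2)}\bigr)\tfrac{p-2}{2}+1=\tfrac{p}{4}>0$; and $\tfrac{p}{2}\bigl(\tfrac12-2\a\bigr)\tfrac{p}{p-2}+1>0$ since $\a<\tfrac14$ --- which are exactly the standing assumptions $p>4$, $\a\in(0,\tfrac14)$, $\a_1\in(0,\tfrac{1-\de}{4})$ of the main theorem. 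Hence every $T$-dependent factor vanishes in the limit $T\downarrow0$ and the required choice of $T$ is possible. I do not expect a genuine obstacle in this lemma: the hard analytic input --- the fundamental-solution estimate behind \eqref{eq:7-0-3} and the parabolic $L^p$-regularity bounds used for $w^{(d,0)}$ and $w^{(st,0)}$ --- has already been carried out, and what is left is the bookkeeping of these exponents and the selection of $T$.
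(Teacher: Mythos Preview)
Your proposal is correct and follows essentially the same approach as the paper: the triangle-inequality splitting $\Ga u=(\Ga u-\Ga 0)+\Ga 0$, the contraction bound from Lemma~\ref{lem:cont-001}, the decomposition $\Ga 0=w^{(d,0)}+w^{(st,0)}$ with Lemmas~\ref{lem:w(0,d)} and~\ref{lem:w(0,st)}, and the combination via $(a+b)^p\le 2^p(a^p+b^p)$ are exactly what the paper does in the paragraphs preceding the lemma. Your additional verification that the exponents appearing in $K_{st}^{(1)}$ and $K_{st}^{(2)}$ are strictly positive (so that \eqref{eq:the way of choosing T} is satisfiable for small $T$) is a useful explicit check that the paper leaves implicit.
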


\subsection{Proof of the main theorem}
\begin{proof}
Let us take a sufficiently small $T>0$ in such a way that \eqref{eq:the way of choosing T} holds.
Applying Lemma \ref{lem:cont-001} and Lemma \ref{lem:ga is onto map} for $w=w^{(d)}+w^{(st)}$,
the assertion is obtained.
In particular, $\si_K>0$, a.s., which follows from the fact that $D^iu$ ($i=0,1,2$) are ${\ga_1}$-continuous in $t$, a.s., using Sobolev embedding theorem (see Section 4.1)
and $u(0,x)=0$ for every $x\in M$.
The proof is complete. 
\end{proof}

\section*{Appendix}
\subsection{Regularity for the fundamental solution}

Friedman \cite{FR64} shows that
the fundamental solution $g(t,x,y)$, $x,y\in\R$ of the second order parabolic linear differential operator $\frac{\partial}{\partial t}-A$ has
the following estimate:
\begin{align*}
\Bigl|D_t^j D_x^\a D_y^\b g(t,x,y)\Bigr|\le t^{-\frac{\a+\b}{2}-j}\overline{g}(t,x,y),
\end{align*}
where 
\begin{align}\label{eq:g-01}
\overline{g}(t,x,y)=K_1t^{-\frac{1}{2}}\exp\Bigl( {-K_2\frac{|x-y|^{2}}{t}\Bigr)},
\end{align}
and $K_1$, $K_2>0$ are constants depending on $\a$, $\b$, $j \in \Z_+$.
In our case, we make use of the estimate in \cite{FR64}, (see also Funaki \cite{F91a} and Yokoyama \cite{Y}). 
%
\begin{lem}
\begin{align}
\Bigl|D_y^{-k} D_x^\a  g(t,x,y)\Bigr|=\le t^{-\frac{\a}{2}+\frac{k}{2}}\overline{g}(t,x,y),\quad t>0,\,x,y\in M,
\end{align}
holds for $k,\a\in \{0,1,2\}$ with $k\le\a$ and suitable $K_1$ and $K_2>0$.
\end{lem}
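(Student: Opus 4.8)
The plan is to reduce the statement to the case $k=1$ by iteration, and to obtain that case from Friedman's pointwise bound $|D_x^\a D_y^\b g(t,x,y)|\le t^{-(\a+\b)/2}\overline{g}(t,x,y)$ recalled above (with $j=0$) together with the elementary one-sided Gaussian tail inequality
\[
\int_r^{\infty} t^{-1/2}e^{-K_2 s^2/t}\,ds\le C(K_2,K_2')\,t^{-1/2}\cdot\sqrt{t}\cdot e^{-K_2' r^2/t},\qquad 0<K_2'<K_2,\ r\ge0,
\]
equivalently $\int_{|x-y|}^{\infty}\overline{g}(t,x,\cdot)\lesssim\sqrt{t}\,\overline{g}(t,x,y)$ after adjusting the constants inside $\overline{g}$. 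Each application of $D_y^{-1}$ costs exactly one integration in the $y$ variable and hence, by this inequality, gains the factor $t^{1/2}$ while only mildly worsening the exponential constant $K_2$; after $k$ steps this produces precisely the factor $t^{k/2}$ appearing in $t^{-\a/2+k/2}\overline{g}$.

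In detail, for the base step $k=1$ (so $\a\in\{1,2\}$) I would define $D_y^{-1}$ by integrating $D_x^\a g(t,x,\cdot)$ along $M$ starting from the point $y_\ast(x)\in M$ of maximal distance from $x$, and split the resulting integral at $y'=x$. On each of the two arcs the integrand is dominated by a one-sided Gaussian $t^{-\a/2}K_1 t^{-1/2}e^{-K_2|x-y'|^2/t}$, so the tail inequality gives $|D_y^{-1}D_x^\a g(t,x,y)|\le C\,t^{-\a/2}\sqrt{t}\cdot t^{-1/2}e^{-K_2'|x-y|^2/t}=t^{-\a/2+1/2}\,\widetilde{\overline{g}}(t,x,y)$ with $\widetilde{K}_1=CK_1$, $\widetilde{K}_2=K_2'$; the contribution from a neighbourhood of the far base point $y_\ast(x)$ is of order $e^{-c(\operatorname{diam}M)^2/t}$ and is absorbed. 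Since the right-hand side $t^{-\a/2+1/2}\overline{g}$ is again of the form $(\text{constant})\cdot t^{-m/2}\overline{g}$, applying the same argument to $D_y^{-1}\bigl(D_y^{-1}D_x^\a g\bigr)$ closes the induction and yields the case $k=2$, $\a=2$. The hypothesis $k\le\a$ is what guarantees we never have to integrate plain $g$, only objects carrying at least one $x$-derivative.

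The delicate point — and the one I expect to be the main obstacle — is exactly the choice of the constant of integration on the compact manifold $M$. The $y$-primitive of $D_x^\a g$ decays Gaussianly away from $x$ only because $\int_M D_x^\a g(t,x,y)\,dy$ is small: writing $h(t,x)=\int_M g(t,x,y)\,dy$, which solves $\partial_t h=Ah$ with $h(0,\cdot)\equiv1$, one has $h(t,x)=1+t\,(A\mathbf{1})(x)+O(t^2)$ and hence $\int_M D_x^\a g(t,x,y)\,dy=D_x^\a h(t,x)=O(t)$ for $\a\ge1$ on bounded time intervals. Thus the chosen primitive fails to be periodic precisely by an $O(t)$ amount, and the two one-sided primitives differ by $O(t)$; this is why one restricts to $k\le\a$, and the $O(t)$ discrepancy is harmless for the downstream use of the lemma — in particular it does not affect the estimates $\int_M|(D_z^{-1})^k D_x^k g(t,x,z)|^2\,dz\lesssim t^{-1/2}$ and the associated integration-by-parts identities in Section 4, where any such lower-order contribution only improves the time integrability. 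The remainder of the argument is routine bookkeeping of how $K_1$ and $K_2$ degrade under the at most two iterations, carried out by simply renaming constants at each step.
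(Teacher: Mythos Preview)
Your proposal is correct and follows essentially the same route as the paper's sketch: both bound $D_x^\a g$ by Friedman's Gaussian estimate and then integrate in $y$ from a base point chosen on the far side of $x$ (the paper's $z_0$ with $|z-x|\le|z_0-x|$ plays exactly the role of your $y_\ast(x)$), gaining the factor $t^{1/2}$ via the one-sided tail inequality and iterating for $k=2$. Your discussion of the constant of integration and the $O(t)$ periodicity defect on the closed curve $M$ is in fact more careful than the paper's outline, which merely says the result ``is obtained by the estimates for the Gaussian kernel'' and gives the single computation \eqref{eq:ap-001}.
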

\begin{proof}
This is obtained by the estimates for the Gaussian kernel hence we only sketch the outline of the proof.
Take $z_0 < z$ sufficiently small satisfying $|z-x|\le|z_0-x|$ for every $x,z\in M$. 
Then we get 
\begin{align}\label{eq:ap-001}
\Bigl|D_x^2 \int_{z_0}^z g(t,x,\tilde{z}) d\tilde{z}\Bigr|
\le t^{-1} \int_{z_0}^z \overline{g}(t,x,\tilde{z}) d\tilde{z}
\le K_1' t^{-\frac12} t^{-\frac12} e^{-{K_2'}\frac{|z-x|^2}{t}}, 
\end{align}
for some $K_1'>0$ and $K_2'\in(0,K_2)$ and by changing of variables. 
Proceeding similar argument again, the estimates in the case of $k=2$ is also obtained.
\end{proof}

\section*{Acknowledgements}
This research is supported by JSPS KIKIN Grant 18K13430.
The author would like to thank to Professor Martina Hofmanov\`a, Professor Tadahisa Funaki and Pierre Simonot for helpful discussion.
The author also would like to express his thanks to Professor Panagiotis Souganidis for a comment to the references.

\end{document}